\documentclass[12pt]{article}

\usepackage[margin=1in]{geometry}
\usepackage{amsmath,amssymb,amsthm}
\usepackage{enumitem}
    \setlist{itemsep=0mm}
\usepackage{pifont}
\usepackage{csquotes}

\usepackage{tikz}
\usetikzlibrary{positioning,fit}
\usepackage{caption}
\usepackage{graphicx}

\usepackage[citestyle=authoryear,bibstyle=authoryear]{biblatex}
\newtheorem{theorem}{Theorem}
\newtheorem{lemma}{Lemma}
\newtheorem{corollary}{Corollary}
\newtheorem{definition}{Definition}

\newtheorem{remark}{Remark}

\DeclareSymbolFont{symbolsC}{U}{txsyc}{m}{n}
\SetSymbolFont{symbolsC}{bold}{U}{txsyc}{bx}{n}
\DeclareMathSymbol{\coloneqq}{\mathrel}{symbolsC}{"42}
\DeclareMathSymbol{\Coloneqq}{\mathrel}{symbolsC}{"46}

\newcommand{\tup}[1]{\langle #1 \rangle}

\newcommand{\would}{\mathbin{>}}
\newcommand{\nec}{\mathop{\Box}}
\newcommand{\pos}{\mathop{\Diamond}}
\newcommand{\then}{\mathbin{\rightarrow}}
\renewcommand{\iff}{\mathbin{\leftrightarrow}}

\newcommand{\class}[1]{\mathcal{#1}}
\newcommand{\WS}{\class{WS}}
\renewcommand{\S}{\class{S}}
\newcommand{\Ax}{\text{AX}}

\title{Stalnaker's Logical Problem of Conditionals is Unsolvable}
\author{Alexander W.\ Kocurek, James Walsh, Yale Weiss}

\date{}

\begin{document}

\maketitle

\begin{abstract}
    The logical problem of conditionals, as conceived by \cite{Stalnaker1968theory}, amounts to axiomatizing a particular semantics for conditionals which utilizes selection functions that take propositions (i.e., sets of worlds) as arguments. 
    While the sentential form of this semantics is recursively axiomatizable, we prove that its enrichment with first-order quantifiers is not---that is, we show that Stalnaker's logical problem of conditionals is unsolvable in the language with first-order quantifiers. 
    We demonstrate this by showing how to interpret arithmetic in the logic.
    In the conclusion, we discuss the implications of this result for the study of conditional logic.

    \bigskip\noindent\textbf{Keywords.} Arithmetic; Conditionals; Incompleteness; Propositions; Quantifiers; Selection functions.
\end{abstract}

\section{The Logical Problem of Conditionals}
\label{Section:The Logical Problem of Conditionals}

One of the central problems in the literature on conditionals is to characterize their logic. 
\cite{Stalnaker1968theory} nicely articulates this task in his seminal paper on conditionals:
\begin{displayquote}[\cite{Stalnaker1968theory}, p.~98]
    My principal concern will be with what has been called the \emph{logical problem of conditionals}\dots 
    This is the task of describing the formal properties of the \emph{conditional function}: a function, usually represented in English by the words `if\dots then', taking ordered pairs of propositions into propositions.
\end{displayquote}
Stalnaker goes on to ``defend a solution'' to the logical problem of conditionals. 
In doing so, he presents a semantics for conditionals, the \emph{selection function semantics}, which has become arguably \emph{the} standard semantic framework adopted by theorists working on conditionals. 
Thus, Stalnaker takes the logical problem of conditionals to ultimately reduce to the problem of characterizing the logical properties of selection functions. 

Later, \cite{StalnakerThomason1970} presented a completeness theorem for a selection function semantics in a first-order setting. 
One might reasonably conclude from this that this constitutes at least \emph{a} solution to the logical problem of conditionals (even if it is debatable whether this solution is correct). 
However, \cite{KocurekWalshWeissA} have recently disputed this. 
They observe that the logic Stalnaker and Thomason identified, which Stalnaker and Thomason call $\mathsf{CQ}$ (or which \cite{KocurekWalshWeissA} call $\mathsf{QST}$), does not actually address the logical problem of conditionals as understood by \cite{Stalnaker1968theory}. 
For as Stalnaker makes plain in the quote above, the problem is not just to analyze the logic of a certain linguistic construction: it is to analyze the logic of the \emph{conditional function}, that is, the operation on propositions (understood as sets of worlds) encoded by the English `if\dots then' construction. 
Stalnaker has repeatedly emphasized the philosophical importance of viewing selection functions as functions on propositions; only functions on propositions can play the theoretical role that Stalnaker and others have assigned to conditionals (for some discussion, see \cite[Section~1]{KocurekWalshWeissA}). Yet what \cite{StalnakerThomason1970} proved is that $\mathsf{CQ}$ is sound and complete for a version of the selection function semantics in which selection functions take \emph{formulas}, not propositions, as arguments. 
While interesting in its own right, this result does not establish that $\mathsf{CQ}$ is the logic of the selection function semantics that \cite{Stalnaker1968theory} defended. 
Indeed, \cite{KocurekWalshWeissA} prove that not only is $\mathsf{CQ}$ not the logic of Stalnaker's semantics, but moreover, $\mathsf{CQ}$ is \emph{frame incomplete}, meaning it is not the logic of \emph{any} class of first-order selection function frames. 
In other words, assuming the conditional function is accurately analyzed in terms of the selection function semantics, $\mathsf{CQ}$ cannot constitute a solution to the logical problem of conditionals, regardless of the constraints one places on selection functions. 

This result reopens the logical problem of conditionals for Stalnakerians. 
Now, Stalnakerians have a partial solution to this problem: the conditional logic $\mathsf{C2}$ is sound and (weakly) complete for the \emph{sentential} version of Stalnaker's semantics, even when selection functions take propositions as arguments.\footnote{On the weak completeness and failure of strong completeness of \textsf{C2}, see \cite[Section~6]{vanFraassen1974hiddenvariables}, \cite[p.~105]{Veltman1985logics}, \cite[fn.~2]{fine2012difficulty}, \cite{kocurek2025stalnaker}, and \cite{dorr2024logic}.} 
Yet this leaves open the problem of determining the logic of Stalnaker's semantics when first-order quantifiers are added.\footnote{A comparison with modal logic may be instructive here. The modal logic \textsf{S4M} with quantifiers and the Barcan formula is incomplete (see \cite{Kripke1967Lemmon2} and \cite[pp.~265--270]{HughesCresswell1996}). Nevertheless, its semantics (i.e., the class of frames defined by this logic) can be completely axiomatized by extending the logic with a single additional schema (see \cite[pp.~159--164]{Cresswell2001howtocomplete}). Hence, the results of \cite{KocurekWalshWeissA} do not in themselves preclude a solution to the problem of axiomatizing Stalnaker's semantics.} 
Call this \emph{Stalnaker's logical problem of conditionals}. 

In this paper, we prove that Stalnaker's logical problem of conditionals is unsolvable. 
That is, the logic of the class of first-order selection function frames by which Stalnaker characterized the conditional function is not recursively axiomatizable. 
The proof borrows heavily from similar nonaxiomatizability results from first-order modal logic, especially \cite{Cresswell1997incompletable}.\footnote{For related results, see \cite{Kamp1977relatedtheorems}, \cite{Montagna1984provability}, \cite[Section~3]{Garson1984quantification}, and \cite{Rybakov2024incompleteness}.}  
In particular, we demonstrate nonaxiomatizability by showing how to effectively interpret the theory of arithmetic within the logic of Stalnaker's semantics. 
Importantly, the proof does not rely on various design choices for how to integrate quantifiers into the semantics (e.g., variable vs.\ constant domains, whether to add an existence or identity predicate, and so on). 
This result thus answers the open questions that \cite{KocurekWalshWeissA} raise at the end of their paper. 

The plan of our paper is as follows. 
Section~\ref{sec:sem} gives a brief overview of the selection function semantics enriched with first-order quantification. 
In Section~\ref{sec:proof}, we prove that Stalnaker's semantics is not axiomatizable and also highlight some corollaries of this result. 
Section~\ref{sec:conc} concludes with directions for future research and some reflections on what this result means for Stalnakerians and for the study of conditional logic more generally.

\section{Selection Function Semantics}
\label{sec:sem}

Before presenting our result, we'll start by reviewing the selection function semantics for conditionals. 
Readers familiar with this semantics may simply want to review the remarks about the Weak Limit Assumption (starting after the proof of Lemma~\ref{lem:rat}) before moving on to Section~\ref{sec:proof}. 

Throughout, we assume that we have a countable stock of individual variables $(x_i)_{i<\omega}$ and, for each $n$, countably many $n$-place predicates $(P_i^n)_{i<\omega}$. 
The language $\mathcal{L}$ of quantified conditional logic is given below:
    \begin{align*}
        \phi & \Coloneqq P^n (x_1,\dots,x_n) \mid \neg\phi \mid (\phi \then \phi) \mid (\phi \would \phi) \mid \forall x\phi
    \end{align*}
where $P^n$ is an $n$-place predicate. 
The other extensional operators---$\wedge$, $\vee$, $\iff$, $\top$, $\bot$, and $\exists$---are defined in the standard way. 
We also adopt the following standard abbreviations:
\begin{align*}
    \nec\phi & \coloneqq (\neg\phi \would \bot) \\
    \pos\phi & \coloneqq \neg(\phi \would \bot)
\end{align*}
We abuse notation in standard ways and often write things like $x$ and $y$ for individual variables and $P$ and $F$ for predicates, allowing arity to be determined by the context. 


\begin{definition}
\label{Definition:Set Selection Frame}
    A \emph{set selection function frame} (hereafter, \emph{selection frame}) is a quintuple $\mathcal{F} = \tup{W,R,f,D,d}$, where:
    \begin{itemize}
        \item $W \neq \varnothing$ is a set of worlds;
        \item $R \subseteq W \times W$ is the accessibility relation; 
        \item $f:\mathcal{P}(W)\times W \to \mathcal{P}(W)$ is the (set) selection function;
        \item $D \neq \varnothing$ is the (global) domain;
        \item $d\colon W \rightarrow \mathcal{P}(D)$ is the local domain assignment.\footnote{We follow \cite[p.~25]{StalnakerThomason1970} in allowing for empty local domains.}
    \end{itemize}
    We write $R(w)$ for $\{x \in W \mid wRx\}$. The only further condition we impose is that for any $P \subseteq W$ and any $w\in W$, $f(P,w) \subseteq R(w)$.
\end{definition}

Note that we are working in a variable domain framework for the sake of generality. 
However, none of the results in this paper hinge on this: everything carries over even in a global domain setting. 

\begin{definition}
    \label{Definition:SelectionModel}
    A \emph{selection model} is a sextuple $\mathcal{M}=\tup{W,R,f,D,d,I}$, where $\tup{W,R,f,D,d}$ is a selection frame (Definition~\ref{Definition:Set Selection Frame}) and for every $w\in W$ and $n$-ary predicate $P^n$, $I(P^n,w)\subseteq D^n$. 
    We say $\mathcal{M}$ is \emph{based on} the frame $\tup{W,R,f,D,d}$. 
\end{definition}

\begin{definition}
    \label{Definition:VariableAssignment}
    A \emph{variable assignment} in a selection model $\tup{W,R,f,D,d,I}$ is a function $g:\mathcal{V}\to{D}$, where $\mathcal{V}$ is the set of variables. 
    A variable assignment $g'$ is an \emph{$x$-variant} of $g$ (in symbols, $g'\sim_{x}g$) if $g(y)=g'(y)$ for all variables $y$ except possibly $x$. 
    For $a\in D$, we write $g^x_a$ for the $x$-variant of $g$ such that $g^x_a(x) = a$. 
\end{definition}

\begin{definition}
\label{Definition:Satisfaction}
    For a given selection model $\mathcal{M} = \tup{W,R,f,D,d,I}$, world $w\in W$, and variable assignment $g$, we define $\Vdash$ as follows (we write $[\phi]^{g}$ for $\{w\in W \mid \mathcal{M},w,g \Vdash \phi\}$):    
    \begin{itemize}
        \item[At.] $\mathcal{M},w,g \Vdash P^{n}(x_1,\dots,x_n)$ iff $\tup{g(x_1),\dots,g(x_n)}\in I(P^{n},w)$;
        \item[$\neg$.] $\mathcal{M},w,g \Vdash \neg\phi$ iff $\mathcal{M},w,g\not\Vdash \phi$;
        \item[$\then$.] $\mathcal{M},w,g \Vdash \phi\then\psi$ iff $\mathcal{M},w,g\not\Vdash \phi$ or $\mathcal{M},w,g \Vdash \psi$;
        \item[$\forall$.] $\mathcal{M},w,g \Vdash \forall{x}\phi$ iff for all $a \in d(w)$, $\mathcal{M},w,g^x_a \Vdash \phi$;
        \item[$\would$.] $\mathcal{M},w,g \Vdash \phi \would \psi$ iff $f([\phi]^{g},w)\subseteq[\psi]^{g}$.
    \end{itemize}
    Where $\Gamma$ is a set of formulas, we write $\mathcal{M},w,g \Vdash \Gamma$ to mean that $\mathcal{M},w,g \Vdash \gamma$ for every $\gamma\in\Gamma$. 
    We write $\mathcal{M},w \Vdash \phi$ to mean that $\mathcal{M},w,g \Vdash \phi$ for every variable assignment $g$. 
\end{definition}


\begin{definition}
\label{Definition:SelectionValidity}
    We say that $\Gamma$ is \emph{valid in the selection frame} $\mathcal{F} = \tup{W,R,f,D,d}$ (in symbols, $\mathcal{F} \Vdash \Gamma$) iff $\mathcal{M},w,g \Vdash \Gamma$ for every model $\mathcal{M} = \tup{\mathcal{F},I}$, every $w \in W$, and every variable assignment $g$ in $\mathcal{M}$. 
    We say that $\Gamma$ is \emph{valid in the class of selection frames} $\mathcal{C}$ (in symbols, $\mathcal{C} \Vdash \Gamma$) iff for every $\mathcal{F}\in\mathcal{C}$, $\mathcal{F} \Vdash \Gamma$. 

    Given a class of frames $\mathcal{C}$, the \emph{logic of $\mathcal{C}$} (in symbols, $\mathsf{L}(\mathcal{C})$) is the set of formulas that are valid in $\mathcal{C}$. 
    Regarding a logic $\mathsf{L}$ as a set of formulas, we say $\mathsf{L}$ is \emph{frame complete} iff there is a class of frames $\mathcal{C}$ where $\mathsf{L} = \mathsf{L}(\mathcal{C})$. 
    We say $\mathsf{L}$ is \emph{frame incomplete} if it is not frame complete, i.e., there is no class of frames $\mathcal{C}$ where $\mathsf{L} = \mathsf{L}(\mathcal{C})$. 
\end{definition}

Having laid out the selection semantics in generality, we now turn to the specific class of frames that Stalnaker uses to analyze conditionals. 

\begin{definition}
\label{Definition:Stalnakerian Function}
    A selection frame $\tup{ W,R,f,D,d}$ is \emph{Stalnakerian} just in case it satisfies these conditions for all $P,Q\subseteq W$, $w\in W$:
    \begin{enumerate}
        \item $f(P,w) \subseteq P$ \hfill (Success)
        \item If $w \in P$, then $w \in f(P,w)$ \hfill (Weak Centering)
        \item If $f(P,w) = \varnothing$, then $P \cap R(w) = \varnothing$ \hfill (LA)
        \item If $f(P,w) \subseteq Q$ and $f(Q,w) \subseteq P$, then $f(P,w) = f(Q,w)$ \hfill (Uniformity)
        \item $|f(P,w)| \leq 1$ \hfill (Uniqueness)
    \end{enumerate}
    A selection frame is \emph{weakly Stalnakerian} if it satisfies conditions 1, 2, 4, and 5. 
\end{definition}


\begin{remark}
\label{Remark:Elementary Stalnakerian Properties}
    Weak Centering implies that the accessibility relation in any (weakly) Stalnakerian frame is reflexive. 
    Moreover, in the presence of Uniqueness, Weak Centering is equivalent to ``Strong Centering'' ($w \in P$ implies $f(P,w) = \{w\}$). 
    Since we generally assume Uniqueness in what follows, we may refer to either condition as the ``Centering'' condition. 
\end{remark}

In Section~\ref{sec:proof}, we will frequently invoke the fact that (weakly) Stalnakerian selection functions obey a sort of ``rational monotonicity'' constraint:

\begin{lemma}\label{lem:rat}
    Let $\mathcal{F} = \tup{W,R,f,D,d}$ be weakly Stalnakerian. 
    If $Q \subseteq P$ and $f(P,w) \cap Q \neq \varnothing$, then $f(Q,w) = f(P,w)$. 
\end{lemma}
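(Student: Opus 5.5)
The plan is to derive the equality $f(Q,w) = f(P,w)$ from Uniformity, checking its two hypotheses $f(P,w) \subseteq Q$ and $f(Q,w) \subseteq P$. Only Success, Uniformity and Uniqueness are needed; Weak Centering plays no role.

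First, for $f(Q,w) \subseteq P$: Success gives $f(Q,w) \subseteq Q$, and $Q \subseteq P$ by assumption. So this inclusion is immediate.

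Second, for $f(P,w) \subseteq Q$: by hypothesis $f(P,w) \cap Q \neq \varnothing$, so $f(P,w)$ is nonempty. Uniqueness then forces $f(P,w) = \{v\}$ for a single world $v$. Since the intersection with $Q$ is nonempty, $v \in Q$, and hence $f(P,w) = \{v\} \subseteq Q$. This is the only step where anything beyond bookkeeping happens. Without Uniqueness, $f(P,w)$ might meet $Q$ without being contained in it, so Uniqueness is exactly what turns ``meets $Q$'' into ``contained in $Q$.''

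With both inclusions established, Uniformity yields $f(P,w) = f(Q,w)$. I anticipate no real obstacle; the only point needing care is to use Uniqueness in the second step rather than trying to obtain $f(P,w) \subseteq Q$ from Success alone.
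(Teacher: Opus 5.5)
Your proposal is correct and follows essentially the same route as the paper's proof. Both use Uniqueness to get $f(P,w)=\{v\}$ with $v\in Q$, use Success to get $f(Q,w)\subseteq Q\subseteq P$, and then conclude with Uniformity.
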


\begin{proof}
    Suppose $f(P,w) \cap Q \neq \varnothing$ and $Q \subseteq P$. 
    By Uniqueness, there is a $v$ such that $f(P,w) = \{v\}$. 
    Thus, $v \in Q$, meaning $f(P,w) \subseteq Q$. 
    By Success, $f(Q,w) \subseteq Q \subseteq P$. 
    Hence, by Uniformity, $f(P,w) = f(Q,w)$. 
\end{proof}

Note that Uniformity and Uniqueness already imply a weak version of the \emph{Limit Assumption} (LA), which we dub the \emph{Weak Limit Assumption} (WLA):
    \begin{enumerate}
        \item[3$^-$.] If $f(P,w) = \varnothing$, then $P \cap f(Q,w) = \varnothing$ \hfill (WLA)
    \end{enumerate}

\begin{remark}
    \label{Remark:StalnakerianvsWeaklyStalnakerian}
    Every Stalnakerian selection frame is weakly Stalnakerian, but not conversely (consult \cite[Section~2]{KocurekWalshWeissA}).
\end{remark}




However, as we'll now show, every weakly Stalnakerian frame is equivalent to some Stalnakerian frame. 

\begin{definition}
    Where $\mathcal{F} = \tup{W,R,f,D,d}$ is a selection frame, the \emph{selection-accessibility} relation $R_f$ is defined as follows: $R_f(w) = \bigcup_{P \subseteq W}f(P,w)$. 
\end{definition}

\begin{remark}
    If $\mathcal{F}$ is Stalnakerian, then $R_f = R$. 
    For by Definition~\ref{Definition:Set Selection Frame}, $f(P,w) \subseteq R(w)$ for all $P \subseteq W$, and thus $R_f(w) \subseteq R(w)$. 
    And by LA, if $v \in R(w)$, then $f(\{v\},w) \neq \varnothing$. 
    By Success, $f(\{v\},w) = \{v\}$, and so $v \in R_f(w)$. 
    Hence, $R(w) \subseteq R_f(w)$. 
\end{remark}

\begin{remark}
    In weakly Stalnakerian models, $\nec$ and $\pos$ have the following derived truth conditions:
    \footnote{
        \emph{Proof} ($\nec$-case): 
        Suppose $\mathcal{M},w,g \Vdash \nec\phi$ (i.e., $\neg\phi \would \bot$). 
        Thus, $f([\neg\phi]^g,w) = \varnothing$. 
        By WLA, $[\neg\phi]^g \cap f(P,w) = \varnothing$ for all $P$. 
        Hence, $[\neg\phi]^g \cap R_f(w) = \varnothing$, i.e., $R_f(w) \subseteq [\phi]^g$. 
        Conversely, suppose $\mathcal{M},w,g \nVdash \nec\phi$. 
        Thus, $f([\neg\phi]^g,w) \neq \varnothing$. 
        Let $v \in f([\neg\phi]^g,w)$. 
        By Success, $v \in R_f(w) \cap [\neg\phi]^g$, i.e., $R_f(w) \nsubseteq [\phi]^g$. 
        } 
    \begin{itemize}
        \item[$\nec$.] $\mathcal{M},w,g \Vdash \nec\phi$ iff $R_f(w)\subseteq[\phi]^{g}$;
        \item[$\pos$.] $\mathcal{M},w,g \Vdash \pos\phi$ iff $R_f(w)\cap[\phi]^{g}\neq\varnothing$.
    \end{itemize}
    In other words, $\nec$ and $\pos$ behave like normal modal operators with respect to $R_f$ (which, for weakly Stalnakerian frames, is at least reflexive). 
\end{remark}

\begin{lemma}\label{lem:convert}
    If $\mathcal{F} = \tup{W,R,f,D,d}$ is a weakly Stalnakerian frame, then $\mathcal{F}^* = \tup{W,R_f,f,D,d}$ is Stalnakerian. 
    Moreover, for all $\phi$, all $w$, all $g$, and all interpretations $I$ over $\mathcal{F}$, where $\mathcal{M} = \tup{\mathcal{F},I}$ and $\mathcal{M}^* = \tup{\mathcal{F}^*,I}$:
    \begin{align*}
        \mathcal{M},w,g \Vdash \phi & \quad\Leftrightarrow\quad \mathcal{M}^*,w,g \Vdash \phi.
    \end{align*}
\end{lemma}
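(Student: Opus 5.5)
Two things need showing: that $\mathcal{F}^*$ is a selection frame satisfying all five Stalnakerian conditions, and that truth is preserved between $\mathcal{M}$ and $\mathcal{M}^*$.

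\textbf{$\mathcal{F}^*$ is a Stalnakerian selection frame.} First, $\mathcal{F}^*$ is a selection frame because $f(P,w) \subseteq \bigcup_{Q \subseteq W} f(Q,w) = R_f(w)$ for every $P$ and $w$; this is immediate from the definition of $R_f$. Success, Weak Centering, Uniformity and Uniqueness mention only $f$, not the accessibility relation. Since $f$ is unchanged, these four conditions transfer directly from the assumption that $\mathcal{F}$ is weakly Stalnakerian.

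So the only real work is LA relative to $R_f$: if $f(P,w)=\varnothing$, then $P \cap R_f(w) = \varnothing$. I would get this from WLA, which the paper notes follows from Uniformity and Uniqueness. Suppose $f(P,w)=\varnothing$ and $v \in P \cap R_f(w)$. Then $v \in f(Q,w)$ for some $Q$, so $P \cap f(Q,w) \neq \varnothing$, contradicting WLA.

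If one wants to verify WLA directly, I would argue as follows. Let $f(Q,w)=\{v\}$ with $v\in P$. Then $f(Q,w)\subseteq P\cup Q$. By Success, $f(P \cup Q,w)\subseteq P\cup Q$, and Uniformity then gives $f(Q,w)=f(P\cup Q,w)=\{v\}$. Since $v \in P \subseteq P \cup Q$, Lemma~\ref{lem:rat} yields $f(P,w)=f(P\cup Q,w)=\{v\}\neq\varnothing$, a contradiction. I expect this WLA step to be the only point requiring any thought; everything else is bookkeeping.

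\textbf{Truth preservation.} I would prove the equivalence by a routine induction on $\phi$, simultaneously for all $w$ and $g$. The clauses for atoms, $\neg$, $\then$ and $\forall$ depend only on $I$, $d$ and $g$, all of which are shared by $\mathcal{M}$ and $\mathcal{M}^*$. The $\would$ clause depends only on $f$ together with the truth sets $[\phi]^g$ and $[\psi]^g$. By the induction hypothesis these truth sets coincide in $\mathcal{M}$ and $\mathcal{M}^*$ for every assignment, and $f$ is the same function in both models. Hence $f([\phi]^g,w)\subseteq[\psi]^g$ holds in one model iff it holds in the other. The accessibility relation never enters the satisfaction clauses, so changing $R$ to $R_f$ has no semantic effect.
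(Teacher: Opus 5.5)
Your main line is the paper's proof. You note that $f(P,w)\subseteq R_f(w)$ by definition, that the four $R$-free conditions transfer unchanged, and that LA relative to $R_f$ is exactly WLA. The truth-preservation induction then only needs the $\would$ clause, where the accessibility relation never appears. Since the paper itself simply invokes WLA at this point, your proof of the lemma stands on that basis.

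The optional direct verification of WLA, however, misapplies Uniformity. To conclude $f(Q,w)=f(P\cup Q,w)$ you need both $f(Q,w)\subseteq P\cup Q$ \emph{and} $f(P\cup Q,w)\subseteq Q$. Success only gives $f(P\cup Q,w)\subseteq P\cup Q$, so the step does not follow as written. In general the closest $(P\cup Q)$-world can be a $P$-world outside $Q$; ruling that out requires the hypothesis $f(P,w)=\varnothing$, which you never used at that point.

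The step can be patched. An element of $f(P\cup Q,w)$ outside $Q$ would lie in $P$, and Lemma~\ref{lem:rat} would then give $f(P,w)=f(P\cup Q,w)\neq\varnothing$. The detour through $P\cup Q$ is unnecessary, though. Suppose $f(P,w)=\varnothing$ and $f(Q,w)=\{v\}$ with $v\in P$. Then $f(P,w)\subseteq Q$ holds vacuously and $f(Q,w)\subseteq P$. Uniformity applied directly to $P$ and $Q$ gives $f(Q,w)=f(P,w)=\varnothing$, a contradiction. This uses only Uniformity and Uniqueness, which is exactly the paper's claim that these two conditions yield WLA.
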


\begin{proof}
    By definition of $R_f$, $f(P,w) \subseteq R_f(w)$. 
    So $\mathcal{F}^*$ is a selection frame. 
    Moreover, if $f(P,w) = \varnothing$, then by WLA, $f(Q,w) \cap P = \varnothing$ for all $Q$, and so $\bigcup_{Q \subseteq W}f(Q,w) \cap P = R_f(w) \cap P = \varnothing$. 
    Hence, $\mathcal{F}^*$ is Stalnakerian. 
    Lastly, the proof that $\mathcal{M}$ and $\mathcal{M}^*$ satisfy the same formulas is by a simple induction on $\phi$.
    In particular, by IH, if $[\phi]^{\mathcal{M},g} = [\phi]^{\mathcal{M}^*,g}$ and $[\psi]^{\mathcal{M},g} = [\psi]^{\mathcal{M}^*,g}$, then $f([\phi]^{\mathcal{M},g},w) \subseteq [\psi]^{\mathcal{M},g}$ iff $f([\phi]^{\mathcal{M}^*,g},w) \subseteq [\psi]^{\mathcal{M}^*,g}$, and thus $\mathcal{M},w,g \Vdash \phi \would \psi$ iff $\mathcal{M}^*,w,g \Vdash \phi \would \psi$. 
\end{proof}

What this shows is that LA does not play a crucial role in the logic of Stalnakerian frames. 
Specifically, let $\WS$ be the class of weakly Stalnakerian frames and $\S$ be the class of Stalnakerian frames. 
Lemma~\ref{lem:convert} immediately yields the following:

\begin{corollary}\label{cor:equiv}
    $\mathsf{L}(\mathcal{WS}) = \mathsf{L}(\mathcal{S})$.
\end{corollary}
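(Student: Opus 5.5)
The plan is to prove the two inclusions separately, using Remark~\ref{Remark:StalnakerianvsWeaklyStalnakerian} for one direction and Lemma~\ref{lem:convert} for the other.

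For $\mathsf{L}(\mathcal{WS}) \subseteq \mathsf{L}(\mathcal{S})$, I will invoke Remark~\ref{Remark:StalnakerianvsWeaklyStalnakerian}: every Stalnakerian frame is weakly Stalnakerian, so $\S \subseteq \WS$. Validity in a class is universal quantification over its frames, hence antitone in the class. So anything valid in every frame of $\WS$ is valid in every frame of $\S$.

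For $\mathsf{L}(\mathcal{S}) \subseteq \mathsf{L}(\mathcal{WS})$, I argue by contraposition.
\begin{enumerate}
\item Suppose $\phi \notin \mathsf{L}(\mathcal{WS})$. Then there is a weakly Stalnakerian frame $\mathcal{F} = \tup{W,R,f,D,d}$, an interpretation $I$, a world $w$, and an assignment $g$ with $\tup{\mathcal{F},I},w,g \nVdash \phi$.
\item By Lemma~\ref{lem:convert}, $\mathcal{F}^* = \tup{W,R_f,f,D,d}$ is Stalnakerian.
\item The interpretation $I$ and the assignment $g$ only involve $W$ and $D$, which $\mathcal{F}^*$ shares with $\mathcal{F}$. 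So $\mathcal{M}^* = \tup{\mathcal{F}^*,I}$ is a legitimate model on $\mathcal{F}^*$, and $g$ is a legitimate assignment in it.
\item The second half of Lemma~\ref{lem:convert} gives $\mathcal{M}^*,w,g \nVdash \phi$.
\item Hence $\phi$ fails in some frame of $\S$, so $\phi \notin \mathsf{L}(\mathcal{S})$.
\end{enumerate}

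There is no real obstacle, since Lemma~\ref{lem:convert} carries the substance. The only point needing care is step 3. Lemma~\ref{lem:convert} is stated for interpretations over $\mathcal{F}$, so I must check that the countermodel transfers with the same $I$ and $g$. It does, because $\mathcal{F}^*$ changes only the accessibility relation.
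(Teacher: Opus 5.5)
Your proof is correct and follows the paper's route: the paper says Lemma~\ref{lem:convert} ``immediately yields'' the corollary, with the trivial inclusion $\mathcal{S} \subseteq \mathcal{WS}$ (Remark~\ref{Remark:StalnakerianvsWeaklyStalnakerian}) giving the other direction. Your contrapositive argument, including the check that the same $I$ and $g$ carry over because $\mathcal{F}^*$ changes only the accessibility relation, just spells out that step.
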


\noindent This means that LA, by itself, cannot be blamed for the nonaxiomatizability result proven in Section~\ref{sec:proof}. 
Even in weakly Stalnakerian frames that do not satisfy LA, we are able to define an underlying structure for which LA holds. 
It is the presence of this underlying structure that wreaks havoc on axiomatizability.

\section{Nonaxiomatizability}\label{sec:proof}

We now establish our main result: the logic of (weakly) Stalnakerian frames is not recursively axiomatizable. 
Our proof strategy is heavily inspired by the proof of the nonaxiomatizability of certain quantified modal logics found in \cite{Cresswell1997incompletable}. 
Specifically, our strategy will be to show how to encode the theory of arithmetic into the logic of Stalnakerian frames. 

First, we start with a familiar observation: from a Stalnakerian selection function, we can define a well-ordering over accessible worlds simply by taking $v \preceq_w u$ to be defined as $v \in f(\{v,u\},w)$. 
This point generalizes to weakly Stalnakerian frames, except the well-ordering is over \emph{selection}-accessible worlds. 

\begin{definition}
    Let $\mathcal{F} = \tup{W,R,f,D,d}$ be weakly Stalnakerian. 
    Where $w \in W$ and $v,u \in R_f(w)$, we write:
    \begin{align*}
        v \preceq_w u & \quad\Leftrightarrow\quad v \in f(\{v,u\},w) \\
        v \prec_w u & \quad\Leftrightarrow\quad v \preceq_w u \text{ and } v \neq u.
    \end{align*}
\end{definition}

\begin{lemma}\label{lem:world-order}
    If $\mathcal{F}$ is weakly Stalnakerian, then $\preceq_w$ is a well-ordering of $R_f(w)$. 
\end{lemma}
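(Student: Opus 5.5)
We must show that $\preceq_w$ is reflexive, antisymmetric, transitive and total on $R_f(w)$, and that every nonempty subset of $R_f(w)$ has a $\preceq_w$-least element. The plan is to establish first a key claim: for any $P \subseteq W$ with $P \cap R_f(w) \neq \varnothing$, $f(P,w)$ is a singleton $\{v\}$ with $v \in P \cap R_f(w)$. Every order property should then fall out of Lemma~\ref{lem:rat}.

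For the key claim, take $u \in P \cap R_f(w)$. By definition of $R_f$, $u \in f(Q,w)$ for some $Q$. If $f(P,w) = \varnothing$, then WLA gives $P \cap f(Q,w) = \varnothing$, contradicting $u \in P \cap f(Q,w)$. So by Uniqueness $f(P,w) = \{v\}$. By Success $v \in P$, and trivially $v \in R_f(w)$.

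\textbf{Reflexivity.} For $v \in R_f(w)$, $f(\{v\},w)$ is nonempty by the key claim and contained in $\{v\}$ by Success. Hence $v \preceq_w v$.

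\textbf{Totality and antisymmetry.} For $v,u \in R_f(w)$, $f(\{v,u\},w)$ is a singleton contained in $\{v,u\}$. So exactly one of $v \preceq_w u$, $u \preceq_w v$ holds when $v \neq u$.

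\textbf{Transitivity.} Suppose $v \preceq_w u \preceq_w t$ and consider $f(\{v,u,t\},w) = \{s\}$. If $s = u$, Lemma~\ref{lem:rat} with $Q = \{v,u\}$ gives $f(\{v,u\},w) = \{u\}$. Then $u = v$ by antisymmetry, and we are done. If $s = t$, applying Lemma~\ref{lem:rat} to $\{u,t\}$ similarly forces $u = t$. So we may assume $s = v$. Then Lemma~\ref{lem:rat} with $Q = \{v,t\}$ gives $f(\{v,t\},w) = \{v\}$, i.e.\ $v \preceq_w t$.

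\textbf{Well-foundedness.} This is the step needing a little care, since it uses arbitrary subsets. Given nonempty $S \subseteq R_f(w)$, the key claim yields $f(S,w) = \{v\}$ with $v \in S$. For any $u \in S$, the set $\{v,u\} \subseteq S$ meets $f(S,w)$. So Lemma~\ref{lem:rat} gives $f(\{v,u\},w) = \{v\}$, i.e.\ $v \preceq_w u$. Thus $v$ is the least element of $S$.

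The only genuine obstacle is guaranteeing nonemptiness of selections on subsets of $R_f(w)$ in the absence of LA. The key claim handles this via WLA, and everything else is routine.
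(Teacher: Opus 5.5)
Your proof is correct and follows essentially the same route as the paper: WLA gives nonempty selections on sets meeting $R_f(w)$, and then Uniqueness, Success and Lemma~\ref{lem:rat} yield each order property, with transitivity handled by the same analysis of which element $f(\{v,u,t\},w)$ selects. The differences are only organizational: you isolate the WLA step as a single preliminary claim, and you get reflexivity directly from that claim plus Success rather than via Lemma~\ref{lem:rat}.
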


Lemma~\ref{lem:world-order} already follows from Lemma~\ref{lem:convert} and the well-known fact that $\prec_w$ is a well-order for Stalnakerian frames. 
But we will establish Lemma~\ref{lem:world-order} directly for the sake of completeness. 


\begin{proof}
    First, $\preceq_w$ is reflexive: 
    If $v \in R_f(w)$, then $v \in f(P,w)$ for some $P$. 
    Thus, $f(P,w) \cap \{v\} \neq \varnothing$. 
    By Success, $v \in P$, i.e., $\{v\} \subseteq P$. 
    By Lemma~\ref{lem:rat}, $f(P,w) = f(\{v\},w)$. 
    So $v \in f(\{v\},w)$, and thus $v \preceq_w v$. 

    Next, $\preceq_w$ is transitive: 
    Let $x \preceq_w y \preceq_w z$. 
    Thus, $x \in f(\{x,y\},w)$ and $y \in f(\{y,z\},w)$. 
    If $f(\{x,y,z\},w) = \varnothing$, then by WLA, $\{x,y,z\} \cap f(\{x,y\},w) = \varnothing$, contrary to the fact that $x \in f(\{x,y\},w)$. 
    Hence, $f(\{x,y,z\},w) \neq \varnothing$. 
    Thus, if $x \in f(\{x,y,z\},w)$, then $f(\{x,y,z\},w) \cap \{x,z\} \neq \varnothing$, and so by Lemma~\ref{lem:rat}, $x \in f(\{x,y,z\},w) = f(\{x,z\},w)$, i.e., $x \preceq_w z$. 
    So it suffices to show that $x \in f(\{x,y,z\},w)$. 
    This follows from two facts, which we will prove:
    \begin{enumerate}[label=(\roman*)]
        \item If $z \in f(\{x,y,z\},w)$, then $y \in f(\{x,y,z\},w)$. For if $z \in f(\{x,y,z\},w)$, then $f(\{x,y,z\},w) \cap \{y,z\} \neq \varnothing$. So by Lemma~\ref{lem:rat}, $f(\{x,y,z\},w) = f(\{y,z\},w) \ni y$, and so $y \in f(\{x,y,z\},w)$. 
        \item If $y \in f(\{x,y,z\},w)$, then $x \in f(\{x,y,z\},w)$. For if $y \in f(\{x,y,z\},w)$, then $f(\{x,y,z\},w) \cap \{x,y\} \neq \varnothing$. So by Lemma~\ref{lem:rat}, $f(\{x,y,z\},w) = f(\{x,y\},w) \ni x$, and so $x \in f(\{x,y,z\},w)$. 
    \end{enumerate}
    Thus, since $f(\{x,y,z\},w) \neq \varnothing$, it follows by (i) and (ii) that $x \in f(\{x,y,z\},w)$. 
    
    Next, $\preceq_w$ is antisymmetric: 
    Suppose $x \preceq_w y$ and $y \preceq_w x$. 
    Thus, $x,y \in f(\{x,y\},w)$. 
    By Uniqueness, $x=y$. 
    
    Next, $\preceq_w$ is total: 
    Let $x,y \in R_f(w)$. 
    So for some $P$ and $Q$, $x \in f(P,w)$ and $y \in f(Q,w)$. 
    If $f(\{x,y\},w) = \varnothing$, then by WLA, $\{x,y\} \cap f(P,w) = \varnothing$, contrary to supposition. 
    Hence, $f(\{x,y\},w) \neq \varnothing$. 
    By Success, either $x \in f(\{x,y\},w)$ or $y \in f(\{x,y\},w)$, i.e., either $x \preceq_w y$ or $y \preceq_w x$. 

    Finally, $\preceq_w$ is well-founded: 
    Let $\varnothing \neq P \subseteq R_f(w)$. 
    If $f(P,w) = \varnothing$, then by WLA, $P \cap R_f(w) = \varnothing$, contrary to supposition. 
    So $f(P,w) \neq \varnothing$. 
    By Success and Uniqueness, $f(P,w) = \{x\}$ for some $x \in P$. 
    Let $y \in P$. 
    Since $f(P,w) \cap \{x,y\} \neq \varnothing$, it follows by Lemma~\ref{lem:rat} that $x \in f(P,w) = f(\{x,y\},w)$. 
    Hence, $x \preceq_w y$. 
\end{proof}


Fix a monadic predicate $F$. 
Define the following:
\begin{align*}
    N(x) & \quad \coloneqq \quad \pos F(x) \\
    x < y & \quad \coloneqq \quad N(x) \wedge N(y) \wedge ((F(x) \vee F(y)) \would \neg F(y)) \\
    x \equiv y & \quad \coloneqq \quad N(x) \wedge N(y) \wedge ((F(x) \vee F(y)) \would (F(x) \wedge F(y))) 
\end{align*}
Effectively, our strategy will be to show that in any weakly Stalnakerian model satisfying some formulas $\phi_1,\dots,\phi_n$ characterizing the behavior of these predicates, the objects satisfying $N(x)$ form a structure that is isomorphic to the natural numbers. 
This will allow us to recursively embed the theory of arithmetic within the logic of (weakly) Stalnakerian frames. 

Throughout, where $\mathcal{M}$ is a weakly Stalnakerian model and where $\phi(\vec{x})$ is a formula, we write $\phi_{\mathcal{M},w}$ for $\{\vec{a} \in d(w) \mid \mathcal{M},w \Vdash \phi(\vec{a})\}$. 
We will often leave $\mathcal{M}$ implicit, writing $\phi_w$ when $\mathcal{M}$ is understood. 
Thus, $N_w = \{a \in d(w) \mid \mathcal{M},w \Vdash N(a)\}$, $<_w = \{\tup{a,b} \in d(w)^2 \mid \mathcal{M},w \Vdash a < b\}$, and $\equiv_w = \{\tup{a,b} \in d(w)^2 \mid \mathcal{M},w \Vdash a \equiv b\}$. 
Similar conventions will be employed throughout.

Where $a \in d(w)$, we write $\bar{a}$ for the unique member of $f([F(a)],w)$, assuming there is one; otherwise, $\bar{a}$ is undefined. 
(Technically, we should write $\bar{a}_w$, but we'll leave $w$ implicit for readability.) 
By Uniqueness, $\bar{a}$ is defined (for $a \in d(w)$) iff $f([F(a)],w) \neq \varnothing$. 
Hence:

\begin{lemma}\label{lem:Nw}
    For any $a \in d(w)$, $\bar{a}$ is defined iff $a \in N_w$. 
\end{lemma}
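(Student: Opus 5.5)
The plan is to unfold both sides of the biconditional until each becomes a statement about $f([F(a)],w)$ being empty or nonempty, and then match the two. Fix $a \in d(w)$ and an assignment sending $x$ to $a$. By the abbreviations, $N(a)$ is $\pos F(a)$, which is $\neg(F(a) \would \bot)$. So, by the satisfaction clause for $\would$ together with the fact that $[\bot]^g = \varnothing$,
\begin{align*}
    \mathcal{M},w \Vdash N(a) \quad\Leftrightarrow\quad f([F(a)],w) \not\subseteq \varnothing \quad\Leftrightarrow\quad f([F(a)],w) \neq \varnothing .
\end{align*}
Since $a \in d(w)$ by hypothesis, the left-hand side is exactly $a \in N_w$.

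Next, I would invoke Uniqueness: $|f([F(a)],w)| \leq 1$. Hence $f([F(a)],w)$ is nonempty iff it has a unique member, and that is precisely the condition under which $\bar{a}$ is defined. Chaining the two equivalences gives the lemma.

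I do not expect any step to be a real obstacle. The only point needing care is that $N(a)$ should be read through the clause for $\neg$ and the clause for $\would$ directly, and not through the derived $\pos$-clause involving $R_f$. That way neither WLA nor LA is needed. Only Uniqueness from the weakly Stalnakerian conditions enters the argument.
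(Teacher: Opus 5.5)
Your proposal is correct and follows essentially the same route as the paper. Both unfold $N(a)$ as $\neg(F(a) \would \bot)$ through the primitive $\would$-clause to obtain $f([F(a)],w) \neq \varnothing$, and both use Uniqueness (which the paper invokes just before the lemma) to equate that nonemptiness with $\bar{a}$ being defined.
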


\begin{proof}
Given the definition of $\pos$, $a \in N_w$ iff $\mathcal{M},w \Vdash \neg(F(a) \would \bot)$, which holds iff $f([F(a)],w) \neq \varnothing$, i.e., $\bar{a}$ is defined. 
\end{proof}


\begin{lemma}\label{lem:order}
For $a,b \in N_w$:
    \begin{enumerate}
        \item $a <_w b$ iff $\bar{a} \prec_w \bar{b}$
        \item $a \equiv_w b$ iff $\bar{a} = \bar{b}$
    \end{enumerate}
\end{lemma}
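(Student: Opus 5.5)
Fix $a,b \in N_w$. By Lemma~\ref{lem:Nw}, $\bar{a}$ and $\bar{b}$ are both defined, so $f([F(a)],w) = \{\bar{a}\}$ and $f([F(b)],w) = \{\bar{b}\}$, and both worlds lie in $R_f(w)$. Write $P_a = [F(a)]$, $P_b = [F(b)]$ and $U = P_a \cup P_b = [F(a) \vee F(b)]$. The plan is to identify $f(U,w)$ with the $\preceq_w$-least of $\bar{a},\bar{b}$, and then read off both clauses. Since $N(a) \wedge N(b)$ holds by assumption, $a <_w b$ reduces to $f(U,w) \subseteq [\neg F(b)]$, and $a \equiv_w b$ reduces to $f(U,w) \subseteq P_a \cap P_b$.

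\textbf{Key step.} We show that $f(U,w) = \{\bar{a}\}$ if $\bar{a} \preceq_w \bar{b}$, and $f(U,w) = \{\bar{b}\}$ otherwise. First, $f(U,w) \neq \varnothing$. Otherwise WLA gives $U \cap f(P_a,w) = \varnothing$, but $\bar{a} \in P_a \subseteq U$ by Success. So by Success and Uniqueness, $f(U,w) = \{u\}$ with $u \in P_a$ or $u \in P_b$.

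Suppose $u \in P_a$. Then Lemma~\ref{lem:rat}, applied with $Q = P_a \subseteq U$, yields $f(P_a,w) = f(U,w)$, so $u = \bar{a}$. Symmetrically, if $u \in P_b$ then $u = \bar{b}$. Hence $u \in \{\bar{a},\bar{b}\}$.

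Next, Lemma~\ref{lem:rat} applied with $Q = \{\bar{a},\bar{b}\} \subseteq U$ gives $f(\{\bar{a},\bar{b}\},w) = \{u\}$. So $u$ is the $\preceq_w$-minimum of the pair, using the definition of $\preceq_w$ and the antisymmetry from Lemma~\ref{lem:world-order}.

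\textbf{Clause 2.} If $\bar{a} = \bar{b}$, then $u = \bar{a} = \bar{b} \in P_a \cap P_b$, so $a \equiv_w b$. Conversely, if $f(U,w) = \{u\} \subseteq P_a \cap P_b$, the key step shows $u = \bar{a}$ (from $u \in P_a$) and $u = \bar{b}$ (from $u \in P_b$). Hence $\bar{a} = \bar{b}$.

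\textbf{Clause 1.} If $\bar{a} \prec_w \bar{b}$, then $u = \bar{a}$. Moreover $\bar{a} \notin P_b$, since otherwise $u \in P_b$ would force $u = \bar{b} \neq \bar{a}$. Thus $f(U,w) \subseteq [\neg F(b)]$, i.e.\ $a <_w b$. Conversely, suppose $u \notin P_b$. Then $u \neq \bar{b}$, because $\bar{b} \in P_b$. So $u = \bar{a} \neq \bar{b}$, and $u$ is the minimum of the pair, giving $\bar{a} \prec_w \bar{b}$.

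\textbf{Main obstacle.} The only delicate point is the key step: showing that $f(U,w)$ is nonempty and equals one of $\bar{a},\bar{b}$. Both parts rest on WLA together with Lemma~\ref{lem:rat}, without invoking LA directly. The remaining bookkeeping is routine.
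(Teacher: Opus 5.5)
Your proof is correct and uses the same ingredients as the paper's: WLA to show $f([F(a)\vee F(b)],w)\neq\varnothing$, then Lemma~\ref{lem:rat} to identify this set both with $f([F(a)],w)$ or $f([F(b)],w)$ and with $f(\{\bar a,\bar b\},w)$. The only difference is packaging. You first prove the single characterization $f([F(a)\vee F(b)],w)=\{\min_{\preceq_w}(\bar a,\bar b)\}$ and read off both clauses from it, whereas the paper reruns these steps separately in each direction and uses Uniformity directly for the forward half of clause~2.
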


\begin{proof}
By Lemma~\ref{lem:Nw}, since $a,b \in N_w$, $\bar{a}$ and $\bar{b}$ are well-defined throughout. 
\begin{enumerate}
    \item Suppose $a <_w b$. 
    Thus, $f([F(a) \vee F(b)],w) \cap [F(b)] = \varnothing$. 
    Hence, $f([F(a) \vee F(b)],w) \subseteq [F(a)]$. 
    If $f([F(a) \vee F(b)],w) = \varnothing$, then by WLA, $f([F(a)],w) \cap [F(a) \vee F(b)] = f([F(a)],w) = \varnothing$, contrary to supposition. 
    So $f([F(a) \vee F(b)],w) \cap [F(a)] \neq \varnothing$. 
    By Lemma~\ref{lem:rat}, $f([F(a) \vee F(b)],w) = f([F(a)],w) = \{\bar{a}\}$. 
    Since $f([F(a) \vee F(b)],w) \cap \{\bar{a},\bar{b}\} \neq \varnothing$, it follows again by Lemma~\ref{lem:rat} that $f(\{\bar{a},\bar{b}\},w) = \{\bar{a}\}$, i.e., $\bar{a} \preceq_w \bar{b}$. 
    And since $\bar{b} \in [F(b)]$, $\bar{b} \notin f([F(a) \vee F(b)],w) = \{\bar{a}\}$. 
    So $\bar{a} \neq \bar{b}$. Hence, $\bar{a} \prec_w \bar{b}$.

    Conversely, suppose $\bar{a} \prec_w \bar{b}$. 
    Suppose for reductio that $f([F(a) \vee F(b)],w) \cap [F(b)] \neq \varnothing$. 
    By Lemma~\ref{lem:rat}, we'd have $f([F(a) \vee F(b)],w) = f([F(b)],w) = \{\bar{b}\}$. 
    But then $f([F(a) \vee F(b)],w) \cap \{\bar{a},\bar{b}\} \neq \varnothing$, so again by Lemma~\ref{lem:rat}, we'd have $\{\bar{a}\} = f(\{\bar{a},\bar{b}\},w) = f([F(a) \vee F(b)],w) = \{\bar{b}\}$, contrary to the fact that $\bar{a} \neq \bar{b}$. 
    Hence, $f([F(a) \vee F(b)],w) \cap [F(b)] = \varnothing$, i.e., $a <_w b$. 

    \item Suppose $a \equiv_w b$. 
    Thus, $f([F(a) \vee F(b)],w) \subseteq [F(a)] \cap [F(b)]$. 
    Since $f([F(a)],w) = \{\bar{a}\} \subseteq [F(a) \vee F(b)] \supseteq \{\bar{b}\} = f([F(b)],w)$, it follows by Uniformity (twice) that $\{\bar{a}\} = \{\bar{b}\}$, i.e., $\bar{a} = \bar{b}$. 

    Conversely, suppose $\bar{a} = \bar{b}$, i.e., $f([F(a)],w) = f([F(b)],w)$. 
    By Success, $f([F(a) \vee F(b)],w) \subseteq [F(a)] \cup [F(b)]$. 
    And by WLA, $f([F(a) \vee F(b)],w) \neq \varnothing$. 
    So either $f([F(a) \vee F(b)],w) \cap [F(a)] \neq \varnothing$ or $f([F(a) \vee F(b)],w) \cap [F(b)] \neq \varnothing$. 
    Either way, by Lemma~\ref{lem:rat}, $f([F(a) \vee F(b)],w) = f([F(a)],w) = f([F(b)],w)$. 
    By Success, $f([F(a) \vee F(b)],w) \subseteq [F(a)] \cap [F(b)]$, i.e., $a \equiv_w b$. 
\end{enumerate}    
\end{proof}

\begin{corollary}\label{cor:preorder}
    $<_w$ is a strict partial order on $N_w$ and $\equiv_w$ is an equivalence relation on $N_w$. Moreover, for all $a,b \in N_w$, if $a \nless_w b$ and $b \nless_w a$, then $a \equiv_w b$. 
\end{corollary}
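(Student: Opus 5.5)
The plan is to transfer everything along the map $a \mapsto \bar{a}$ from $N_w$ into $R_f(w)$ and then use Lemma~\ref{lem:world-order}. By Lemma~\ref{lem:Nw}, $\bar{a}$ is defined for every $a \in N_w$. We also need $\bar{a} \in R_f(w)$, which is immediate: $\bar{a} \in f([F(a)],w) \subseteq R_f(w)$ by the definition of $R_f$. So the map $a \mapsto \bar{a}$ sends $N_w$ into the domain on which $\preceq_w$ is a well-ordering. Lemma~\ref{lem:order} then tells us that, on $N_w$, $<_w$ is the pullback of $\prec_w$ and $\equiv_w$ is the pullback of equality.

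First, $\equiv_w$ is an equivalence relation on $N_w$. We have $a \equiv_w b$ iff $\bar{a} = \bar{b}$, and equality of images under a function is always reflexive, symmetric, and transitive. Each property is a one-line check via Lemma~\ref{lem:order}(2).

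Second, $<_w$ is a strict partial order. By Lemma~\ref{lem:world-order}, $\prec_w$ is the strict part of a linear order on $R_f(w)$, so it is irreflexive and transitive. For irreflexivity of $<_w$: if $a <_w a$ then $\bar{a} \prec_w \bar{a}$, which is impossible since $\prec_w$ requires $\bar a \ne \bar a$. For transitivity of $<_w$: $a <_w b <_w c$ gives $\bar{a} \prec_w \bar{b} \prec_w \bar{c}$. Then $\bar{a} \preceq_w \bar{c}$ by transitivity of $\preceq_w$. Also $\bar{a} \neq \bar{c}$, since otherwise antisymmetry applied to $\bar{a} \preceq_w \bar{b} \preceq_w \bar{a}$ would force $\bar{a} = \bar{b}$, a contradiction. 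Hence $\bar a\prec_w\bar c$, so $a <_w c$ by Lemma~\ref{lem:order}(1).

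Third, the trichotomy clause. Suppose $a \nless_w b$ and $b \nless_w a$. By Lemma~\ref{lem:order}(1), neither $\bar{a} \prec_w \bar{b}$ nor $\bar{b} \prec_w \bar{a}$ holds. By totality of $\preceq_w$ (Lemma~\ref{lem:world-order}), either $\bar{a} \preceq_w \bar{b}$ or $\bar{b} \preceq_w \bar{a}$. In either case, since the strict version fails, we get $\bar{a} = \bar{b}$. So $a \equiv_w b$ by Lemma~\ref{lem:order}(2).

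There is no real obstacle here. The only point needing care is confirming that $\bar{a}$ lies in $R_f(w)$, so that Lemma~\ref{lem:world-order} applies to it; the rest is a mechanical transfer of structure.
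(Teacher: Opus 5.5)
Your proposal is correct and matches the paper's approach: the paper gives no separate proof and treats the corollary as immediate from Lemma~\ref{lem:order} together with Lemma~\ref{lem:world-order}, which amounts to pulling back $\prec_w$ and equality along $a \mapsto \bar{a}$. Your explicit check that $\bar{a} \in f([F(a)],w) \subseteq R_f(w)$ is worth keeping, since the paper uses it tacitly: without it $\prec_w$ would not even be defined on the values $\bar{a}$.
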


We'll write $a_\equiv$ for the $\equiv_w$-equivalence class containing $a$. 
Technically we should write $a_{\equiv_w}$, but we'll leave $w$ implicit. 
Similarly, we will write $N_w/\equiv$ in place of $N_w/\equiv_w$. 

We'll also define $<_w^*$ to be $\{\tup{a_\equiv,b_\equiv} \mid a <_w b\}$. 
That is, $<_w^*$ is the induced order on the equivalence classes.
This is well-defined: if $a' \in a_\equiv$ and $b' \in b_\equiv$, then by Lemma~\ref{lem:order}, $\bar{a} = \bar{a'}$ and $\bar{b} = \bar{b'}$, and so $a' <_w b'$ iff $\bar{a'} \prec_w \bar{b'}$ iff $\bar{a} \prec_w \bar{b}$ iff $a <_w b$. 

\begin{lemma}\label{lem:well-order}
    $<_w^*$ is a well-order on $N_w/\equiv$. 
\end{lemma}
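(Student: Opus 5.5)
The plan is to transfer the well-ordering of Lemma~\ref{lem:world-order} along the map $a \mapsto \bar{a}$. Define $h\colon N_w/\equiv \;\to R_f(w)$ by $h(a_\equiv) = \bar{a}$. This map needs three checks.

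First, $h$ is well-defined: by Lemma~\ref{lem:Nw}, $\bar{a}$ exists for every $a \in N_w$. By Lemma~\ref{lem:order}(2), $a \equiv_w a'$ implies $\bar{a} = \bar{a'}$, so the value does not depend on the representative.

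Second, $h$ lands in $R_f(w)$, since $\bar{a} \in f([F(a)],w) \subseteq R_f(w)$ by the definition of $R_f$.

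Third, $h$ is injective, again by the converse direction of Lemma~\ref{lem:order}(2). Moreover, $h$ is an order-embedding: by the well-definedness remark for $<^*_w$ together with Lemma~\ref{lem:order}(1), $a_\equiv <^*_w b_\equiv$ iff $a <_w b$ iff $\bar{a} \prec_w \bar{b}$, that is, iff $h(a_\equiv) \prec_w h(b_\equiv)$.

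Since $\preceq_w$ is a well-ordering of $R_f(w)$ by Lemma~\ref{lem:world-order}, its strict part $\prec_w$ is a strict well-order there. Any injection that reflects and preserves strict order pulls a strict well-order back to a strict well-order, so it remains to verify the properties of $<^*_w$ one at a time.

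Irreflexivity and transitivity of $<^*_w$ follow directly from the corresponding properties of $\prec_w$, or equivalently from Corollary~\ref{cor:preorder}.

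Totality (trichotomy) also follows. Given distinct classes $a_\equiv \neq b_\equiv$, injectivity gives $\bar{a} \neq \bar{b}$. Totality of $\preceq_w$ then yields $\bar{a} \prec_w \bar{b}$ or $\bar{b} \prec_w \bar{a}$, hence $a_\equiv <^*_w b_\equiv$ or $b_\equiv <^*_w a_\equiv$.

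For well-foundedness, take a nonempty set $X \subseteq N_w/\equiv$. Its image $h[X]$ is a nonempty subset of $R_f(w)$, so it has a $\preceq_w$-least element $\bar{c}$ with $c_\equiv \in X$. For any $b_\equiv \in X$ we cannot have $b_\equiv <^*_w c_\equiv$, since that would give $\bar{b} \prec_w \bar{c}$. So $c_\equiv$ is $<^*_w$-minimal in $X$, and by totality it is least.

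None of these steps is a real obstacle; the substance lies entirely in Lemmas~\ref{lem:world-order} and \ref{lem:order}. The only point needing care is that every $\bar{a}$ lies in $R_f(w)$, the domain on which $\preceq_w$ is known to be a well-ordering. This is immediate from the definition of $R_f$.
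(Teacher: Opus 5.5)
Your proof is correct and follows essentially the paper's route: the strict partial order comes from Lemma~\ref{lem:order} and Corollary~\ref{cor:preorder}, and well-foundedness comes from taking the $\preceq_w$-least element of $\{\bar{c} \mid c_\equiv \in X\}$ via Lemma~\ref{lem:world-order}. The one difference is totality: you get it by pulling back the totality of $\preceq_w$ along the injective map $a_\equiv \mapsto \bar{a}$, whereas the paper re-derives it directly from Uniqueness and Lemma~\ref{lem:rat} applied to $f([F(a) \vee F(b)],w)$. Your version is slightly more uniform.
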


\begin{proof}
    By Corollary~\ref{cor:preorder}, $<_w^*$ is a strict partial order. 
    So we just need to establish that it is total and well-founded. 
    
    First, totality: 
    Suppose $a_\equiv \neq b_\equiv$ and $a_\equiv \nless_w^* b_\equiv$ where $a_\equiv,b_\equiv \in N_w/\equiv$. 
    Thus, $\bar{a} \neq \bar{b}$ and $\mathcal{M},w \nVdash ((F(a) \vee F(b)) \would \neg F(b))$. 
    By Uniqueness, $\mathcal{M},w \Vdash ((F(a) \vee F(b)) \would F(b))$. 
    Since $f([F(a) \vee F(b)],w) \cap [F(b)] \neq \varnothing$, by Lemma~\ref{lem:rat}, we have $f([F(a) \vee F(b)],w) = f([F(b)],w) = \{\bar{b}\}$. 
    Since $f([F(a) \vee F(b)],w) \cap \{\bar{a},\bar{b}\} \neq \varnothing$, by Lemma~\ref{lem:rat} again, we have $f(\{\bar{a},\bar{b}\},w) = f([F(a) \vee F(b)],w) = \{\bar{b}\}$. 
    Hence, $\bar{b} \prec_w \bar{a}$. 
    By Lemma~\ref{lem:order}, $b <_w a$, and thus $b_\equiv <_w^* a_\equiv$. 
    So $<_w^*$ is total. 

    Next, well-foundedness: 
    If $\varnothing \neq X \subseteq N_w/\equiv$, then by Lemma~\ref{lem:world-order}, $\{\bar{c} \mid c_\equiv \in X\}$ (which is well-defined by Lemma~\ref{lem:order}) has a $\prec_w$-minimal element, say, $\bar{d}$. 
    So if there is a $d'_\equiv \in X$ where $d'_\equiv <_w^* d_\equiv$, then $d' <_w d$, which by Lemma~\ref{lem:order} means $\bar{d'} \prec_w \bar{d}$, contrary to the fact that $\bar{d}$ is $\prec_w$-minimal. 
    Hence, $d_\equiv$ is the $<_w^*$-minimal element of $N_w/\equiv$. 
\end{proof}

Now we will show how to encode first-order arithmetic within the logic of weakly Stalnakerian frames. 
The strategy will be to define some formulas that effectively say the objects satisfying $N(x)$ (modded by $\equiv_w$) form a structure that is isomorphic to the natural numbers. 
This is possible since Lemma~\ref{lem:well-order} ensures that the objects satisfying $N(x)$ (modded by $\equiv_w$) form at least a well-order.  

To start, define:
\begin{align*}
    Z(x) & \quad\coloneqq\quad N(x) \wedge \neg\exists y (y < x) \\
    S(x,y) & \quad\coloneqq\quad x < y \wedge \neg\exists z (x < z \wedge z < y)
\end{align*}
We introduce three axioms governing $Z$ and $S$:
\begin{enumerate}[label=(Ax\arabic*),itemsep=1ex]
    \item $\exists x N(x)$\label{ax:zero}
    \item $\forall x (N(x) \then \exists y\ S(x,y))$\label{ax:succ}
    \item $\forall x((N(x) \wedge \neg Z(x)) \then \exists y S(y,x))$\label{ax:pred}
\end{enumerate}

\begin{lemma}\label{lem:nat}
    If $\mathcal{M},w \Vdash \text{\ref{ax:zero}--\ref{ax:pred}}$, then $\tup{N_w/\equiv,<_w^*} \cong \tup{\omega,<}$. 
    Moreover, where $\sigma\colon \tup{N_w/\equiv,<_w^*} \cong \tup{\omega,<}$, $Z_w$ is the $<_w^*$-minimal element of $N_w/\equiv$, i.e., $\sigma(Z_w) = 0$, and $S_w^* = \{\tup{a_\equiv,b_\equiv} \mid S_w(a,b)\}$ is the successor relation over $N_w/\equiv$, i.e., $S_w^*(a_\equiv,b_\equiv)$ iff $\sigma(b_\equiv) = \sigma(a_\equiv)+1$. 
\end{lemma}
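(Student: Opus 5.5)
The plan is to unfold the three axioms semantically at $w$ into purely order-theoretic statements about $\tup{N_w/\equiv,<_w^*}$. Then I would combine them with Lemma~\ref{lem:well-order}, which already says $<_w^*$ is a well-order on $N_w/\equiv$. A nonempty well-order in which every element has an immediate successor and every non-least element has an immediate predecessor has order type $\omega$.

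\textbf{Step 1: translate $Z$ and $S$.} The quantifiers in $Z$ and $S$ range over $d(w)$. However, $y<x$ entails $N(y)$, so any witness automatically lies in $N_w$. Hence $a\in Z_w$ iff $a\in N_w$ and there is no $b\in N_w$ with $b<_w a$. Since $<_w$ is $\equiv_w$-invariant (the remark after Corollary~\ref{cor:preorder}), this says exactly that $a_\equiv$ is the $<_w^*$-least element of $N_w/\equiv$.

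Similarly, $S_w(a,b)$ iff $a<_w b$ and there is no $c\in N_w$ with $a<_w c<_w b$. Again using invariance, this says that $b_\equiv$ is the immediate $<_w^*$-successor of $a_\equiv$. The same invariance shows that $S_w^*$ is well defined on classes.

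\textbf{Step 2: translate the axioms.} \ref{ax:zero} gives $N_w\neq\varnothing$. \ref{ax:succ} says every class has an immediate successor, so in particular there is no greatest element. \ref{ax:pred} says every non-least class has an immediate predecessor. By Lemma~\ref{lem:well-order}, the nonempty set $N_w/\equiv$ has a least element, and by Step 1 $Z_w$ is precisely that least class.

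\textbf{Step 3: show the order type is $\omega$.} First I would show every class has only finitely many $<_w^*$-predecessors. Suppose not, and by well-foundedness take the least class $\alpha$ with infinitely many predecessors. Then $\alpha$ is not least, so by \ref{ax:pred} it has an immediate predecessor $\beta$. The predecessors of $\alpha$ are exactly those of $\beta$ together with $\beta$, and by minimality of $\alpha$ that set is finite, a contradiction. Second, by \ref{ax:succ} the order has no maximum, so it is infinite.

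Now define $\sigma(\alpha)$ as the number of predecessors of $\alpha$. This map is strictly increasing, hence injective, by totality. It is surjective onto $\omega$ because the order is infinite and every initial segment is finite. So $\sigma$ is an isomorphism.

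Under $\sigma$, the least class $Z_w$ goes to $0$. Immediate successors in a linear order are preserved by isomorphism, and in $\omega$ the immediate successor of $n$ is $n+1$. Hence $S_w^*(a_\equiv,b_\equiv)$ iff $\sigma(b_\equiv)=\sigma(a_\equiv)+1$.

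I expect the main obstacle to be the bookkeeping in Step 1 rather than the order theory. The quantifiers range over the local domain $d(w)$, not over $N_w/\equiv$, so I must check that the witnesses demanded by $\neg\exists$ and $\exists$ in $Z$, $S$ and \ref{ax:zero}--\ref{ax:pred} correspond exactly to classes. I must also check that passing to $\equiv_w$-classes commutes with the ``no element in between'' condition. Both points rest on two facts: $<$ and $\equiv$ build in the conjuncts $N(x)\wedge N(y)$, and $<_w$ respects $\equiv_w$ by Lemma~\ref{lem:order}.
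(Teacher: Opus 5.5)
Your proposal is correct and follows the paper's overall route. You invoke Lemma~\ref{lem:well-order}, read \ref{ax:zero}--\ref{ax:pred} as nonemptiness, immediate successors, and immediate predecessors for non-least classes, and identify $Z_w$ and $S_w^*$ using the $\equiv_w$-invariance of $<_w$; the only difference is that where the paper appeals to ordinal order types (a limit $\lambda\geq\omega$ that \ref{ax:pred} forces to be $\leq\omega$), you build $\sigma$ explicitly by counting predecessors via well-founded induction, a more elementary version of the same idea that also spells out the quantifier bookkeeping over $d(w)$.
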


\begin{proof}
    By Lemma~\ref{lem:well-order}, $\tup{N_w/\equiv,<_w^*}$ is a well-order. 
    By \ref{ax:zero}, $N_w$ is nonempty. 
    By \ref{ax:succ}, $N_w$ has no $<_w$-maximum element.  
    Together, this means $\tup{N_w/\equiv,<_w^*}$ has a limit ordinal $\lambda \geq \omega$ as its order-type. 
    By \ref{ax:pred}, every non-zero element has an immediate $<_w^*$-predecessor. 
    This means $\tup{N_w/\equiv,<_w^*}$ has an order-type $\lambda \leq \omega$. 
    Hence, $\lambda = \omega$. 
    
    Now, fix an isomorphism $\sigma\colon \tup{N_w/\equiv,<_w^*} \cong \tup{\omega,<}$. 
    First, observe $Z_w \neq \varnothing$ by Lemma~\ref{lem:order} (specifically, $\sigma^{-1}(0) \subseteq Z_w$). 
    Moreover, $Z_w \in N_w/\equiv$: 
    If $a,a' \in Z_w$, then $a \nless_w a'$ and $a' \nless_w a$ by definition of $Z(x)$. 
    Thus, $a_\equiv \nless_w^* a'_\equiv$ and $a'_\equiv \nless_w^* a_\equiv$. 
    By Lemma~\ref{lem:well-order}, $a_\equiv = a'_\equiv$, i.e., $a \equiv_w a'$. 
    Hence, $Z_w \in N_w/\equiv$. 
    By definition of $Z(x)$, there's no $b \in N_w$ such that $b_\equiv <_w^* Z_w$. 
    Hence, $Z_w$ is the $<_w^*$-minimal element, i.e., $\sigma(Z_w) = 0$. 
    
    Finally, $S_w^*$ is the successor relation: 
    $S_w^*(a_\equiv,b_\equiv)$ iff $a <_w b$ and there's no $c \in d(w)$ such that $a <_w c <_w b$. 
    This is true iff $a_\equiv <_w^* b_\equiv$ and there's no $c \in d(w)$ such that $a_\equiv <_w^* c_\equiv <_w^* b_\equiv$. 
    By isomorphism, this is true iff $\sigma(a_\equiv) < \sigma(b_\equiv)$ and there's no $n$ such that $\sigma(a_\equiv) < n < \sigma(b_\equiv)$, i.e., $\sigma(b)$ is the successor of $\sigma(a)$. 
\end{proof}

Fix two 3-place predicates $A$ and $M$. 
Now we introduce the following axioms governing these:
\begin{enumerate}[label=(Ax\arabic*)]
\setcounter{enumi}{3}
    \item $\forall x_1x_2x_3\forall y_1y_2y_3((x_1 \equiv y_1 \wedge x_2 \equiv y_2 \wedge x_3 \equiv y_3) \then ((A(x_1,x_2,x_3) \iff A(y_1,y_2,y_3)) \wedge (M(x_1,x_2,x_3) \iff M(y_1,y_2,y_3))))$\label{ax:cong}
    \item $\forall xyzz'(((A(x,y,z) \wedge A(x,y,z')) \then z \equiv z') \wedge ((M(x,y,z) \wedge M(x,y,z')) \then z \equiv z'))$\label{ax:func}
    \item $\forall xy((N(x) \wedge Z(y)) \then (A(x,y,x) \wedge M(x,y,y)))$\label{ax:unit}
    \item $\forall xyy'zz'((S(y,y') \wedge S(z,z')) \then (A(x,y,z) \then A(x,y',z')))$\label{ax:add}
    \item $\forall xyy'zu((S(y,y') \wedge A(z,x,u)) \then (M(x,y,z) \then M(x,y',u)))$\label{ax:prod}
\end{enumerate}
Let $\Ax \coloneqq \bigwedge \text{\ref{ax:zero}--\ref{ax:prod}}$. 

\begin{lemma}
    Let $\mathcal{M},w \Vdash \Ax$. 
    Let $\sigma\colon \tup{N_w/\equiv,<_w^*} \cong \tup{\omega,<}$. 
    Then for all $a,b,c \in N_w$:
    \begin{enumerate}
        \item $A_w(a,b,c)$ iff $\sigma(a_\equiv) + \sigma(b_\equiv) = \sigma(c_\equiv)$.
        \item $M_w(a,b,c)$ iff $\sigma(a_\equiv) \times \sigma(b_\equiv) = \sigma(c_\equiv)$.
    \end{enumerate}
\end{lemma}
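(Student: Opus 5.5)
The plan is to prove both clauses by induction on $\sigma(b_\equiv)$, using Lemma~\ref{lem:nat}. Throughout, two facts are used. First, since $\sigma$ is a bijection onto $\omega$, every natural number $n$ is $\sigma(e_\equiv)$ for some $e \in N_w \subseteq d(w)$, so witnesses for the quantifiers in \ref{ax:add} and \ref{ax:prod} are always available in $d(w)$. Second, by \ref{ax:cong}, the truth of $A(a,b,c)$ and $M(a,b,c)$ at $w$ depends only on $a_\equiv, b_\equiv, c_\equiv$. So in each clause it suffices to handle one chosen representative of each class.

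For clause~1, the left-to-right direction follows from an existence claim together with functionality. The existence claim is: for every $a \in N_w$ and every $n<\omega$, if $\sigma(b_\equiv)=n$ and $\sigma(c_\equiv)=\sigma(a_\equiv)+n$, then $A_w(a,b,c)$.
\begin{itemize}
\item Base case $n=0$. Lemma~\ref{lem:nat} gives $b \in Z_w$ and $c \equiv_w a$. Axiom~\ref{ax:unit} yields $A(a,b,a)$, and \ref{ax:cong} transfers this to $A(a,b,c)$.
\item Inductive step. Given $b'$ with $\sigma(b'_\equiv)=n+1$ and $c'$ with $\sigma(c'_\equiv)=\sigma(a_\equiv)+n+1$, choose $b, c \in N_w$ with $\sigma(b_\equiv)=n$ and $\sigma(c_\equiv)=\sigma(a_\equiv)+n$. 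The induction hypothesis gives $A_w(a,b,c)$. By the successor part of Lemma~\ref{lem:nat}, $S_w(b,b')$ and $S_w(c,c')$ hold. Then \ref{ax:add} gives $A_w(a,b',c')$.
\end{itemize}
Now suppose $A_w(a,b,c)$ for $a,b,c \in N_w$. Pick $c'$ with $\sigma(c'_\equiv)=\sigma(a_\equiv)+\sigma(b_\equiv)$. By the existence claim $A_w(a,b,c')$ holds, so \ref{ax:func} gives $c \equiv_w c'$. Hence $\sigma(c_\equiv)=\sigma(c'_\equiv)$, which is the required equation. The right-to-left direction is just the existence claim.

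Clause~2 follows the same pattern, now with clause~1 available.
\begin{itemize}
\item Base case. \ref{ax:unit} gives $M(a,z,z)$ for $z \in Z_w$, and $\sigma(z_\equiv)=0=\sigma(a_\equiv)\cdot 0$.
\item Inductive step. Assume $M_w(a,b,c)$ with $\sigma(c_\equiv)=\sigma(a_\equiv)\sigma(b_\equiv)$, and let $S_w(b,b')$. Choose $u\in N_w$ with $\sigma(u_\equiv)=\sigma(c_\equiv)+\sigma(a_\equiv)$. By clause~1, $A_w(c,a,u)$. Then \ref{ax:prod}, instantiated with $x:=a$, $y:=b$, $y':=b'$, $z:=c$, gives $M_w(a,b',u)$. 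Here $\sigma(u_\equiv)=\sigma(a_\equiv)(\sigma(b_\equiv)+1)$, as required.
\end{itemize}
Representative-independence via \ref{ax:cong} and the converse via \ref{ax:func} then go through exactly as in clause~1.

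The main point of care is bookkeeping, not any deep step. The axioms quantify over $d(w)$, and $S_w$ and $Z_w$ are defined on elements rather than on classes. So at each step I must check three things: that the chosen witnesses lie in $N_w$; that $S_w(b,b')$ genuinely holds for the particular representatives chosen; and that $\equiv_w$ only relates members of $N_w$. The second point holds because $<_w$ depends only on the classes $\bar{a}$ (Lemma~\ref{lem:order}), hence so does $S_w$. The third holds because $x \equiv y$ includes the conjuncts $N(x)$ and $N(y)$, so \ref{ax:func} cannot be satisfied vacuously by elements outside $N_w$.
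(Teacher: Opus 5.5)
Your proof is correct and follows essentially the same route as the paper: induction on $\sigma(b_\equiv)$, with \ref{ax:unit} and \ref{ax:cong} for the base case, and \ref{ax:add} and \ref{ax:prod} (the latter fed by clause~1) to carry the existence direction forward, and \ref{ax:func} to get the converse from existence. The differences are minor. You find the predecessor of $c'$ via surjectivity of $\sigma$ and the successor clause of Lemma~\ref{lem:nat} rather than via \ref{ax:pred}, and you build $u$ with $\sigma(u_\equiv)=\sigma(c_\equiv)+\sigma(a_\equiv)$ forward instead of subtracting $\sigma(a_\equiv)$ backward; you also make explicit the class-invariance of $S_w$, which the paper uses tacitly.
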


\begin{proof}
    In each case, we proceed by induction on $\sigma(b_\equiv)$.
    \begin{enumerate}
        \item 
        \begin{enumerate}[label=(\roman*)]
            \item \textbf{Base Case:} Suppose $b \in N_w$ where $\sigma(b_\equiv) = 0$. 
            By Lemma~\ref{lem:nat}, $b_\equiv = Z_w$, and so $b \in Z_w$. 
            By \ref{ax:unit}, $A_w(a,b,a)$. 
            \begin{itemize}
                \item[($\Rightarrow$)] Suppose $A_w(a,b,c)$. 
                Then by \ref{ax:func}, $c \equiv_w a$, and so $c_\equiv = a_\equiv$. 
                Hence, $\sigma(a_\equiv) + \sigma(b_\equiv) = \sigma(a_\equiv) + 0 = \sigma(a_\equiv) = \sigma(c_\equiv)$. 
                
                \item[($\Leftarrow$)] Suppose $\sigma(a_\equiv) + \sigma(b_\equiv) = \sigma(c_\equiv)$. 
                Then $\sigma(a_\equiv) + 0 = \sigma(a_\equiv) = \sigma(c_\equiv)$, and so $a \equiv_w c$. 
                Since $A_w(a,b,a)$, it follows by \ref{ax:cong} that $A_w(a,b,c)$. 
            \end{itemize}

            \item \textbf{Inductive Step:} Suppose the claim holds for $b$, where $\sigma(b_\equiv) = n$. 
            Let $\sigma(b'_\equiv) = n+1$. 
            Thus, $b'_\equiv$ is the $S_w^*$-successor of $b_\equiv$, i.e., $S_w(b,b')$. 
            We'll show that the claim holds for $b'_\equiv$. 
            \begin{itemize}
                \item[($\Leftarrow$)] Suppose $\sigma(a_\equiv) + \sigma(b'_\equiv) = \sigma(c'_\equiv)$ for some $a_\equiv$ and $c'_\equiv$. 
                Since $\sigma(b'_\equiv) \neq 0$, $\sigma(c'_\equiv) \neq 0$, and so $c' \notin Z_w$ by Lemma~\ref{lem:nat}. 
                By \ref{ax:pred}, there is a $c \in N_w$ such that $S_w(c,c')$. 
                Thus, by Lemma~\ref{lem:nat}, $\sigma(a_\equiv) + \sigma(b_\equiv) = \sigma(c_\equiv)$. 
                By IH, $A_w(a,b,c)$. 
                By \ref{ax:add}, $A_w(a,b',c')$. 
                
                \item[($\Rightarrow)$] Suppose now that $\sigma(a_\equiv) + \sigma(b'_\equiv) \neq \sigma(c'_\equiv)$ for some $a_\equiv$ and $c'_\equiv$. 
                Since $\sigma$ is an isomorphism, there's a $d' \in N_w$ such that $\sigma(a_\equiv) + \sigma(b'_\equiv) = \sigma(d'_\equiv)$. 
                Thus, by the above reasoning, $A_w(a,b',d')$. 
                Since $\sigma(c'_\equiv) \neq \sigma(d'_\equiv)$, $c' \not\equiv_w d'$. 
                So by \ref{ax:func}, $\neg A_w(a,b',c')$. 
            \end{itemize}
        \end{enumerate}
     
        \item 
        \begin{enumerate}[label=(\roman*)]
            \item \textbf{Base Case:} Suppose $\sigma(b_\equiv) = 0$. 
            By Lemma~\ref{lem:nat}, $b \in Z_w$. 
            By \ref{ax:unit}, $M_w(a,b,b)$. 
            \begin{itemize}
                \item[($\Rightarrow$)] Suppose $M_w(a,b,c)$. 
                Then by \ref{ax:func}, $c \equiv_w b$, and so $\sigma(c_\equiv) = 0$. 
                Hence, $\sigma(a_\equiv) \times \sigma(b_\equiv) = \sigma(a_\equiv) \times 0 = 0 = \sigma(c_\equiv)$. 

                \item[($\Leftarrow$)] Suppose now that $\sigma(a_\equiv) \times \sigma(b_\equiv) = \sigma(c_\equiv)$. 
                Then $\sigma(c_\equiv) = \sigma(a_\equiv) \times 0 = 0$, i.e., $c \equiv_w b$. 
                Since $M_w(a,b,b)$, it follows by \ref{ax:cong} that $M_w(a,b,c)$. 
            \end{itemize}

            \item \textbf{Inductive Step:} Now suppose the claim holds for $b$ where $\sigma(b_\equiv) = n$. 
            Let $\sigma(b'_\equiv) = n+1$. 
            Thus, $b'_\equiv$ is the $S_w^*$-successor of $b_\equiv$. 
            We'll show that the claim holds for $b'_\equiv$. 
            \begin{itemize}
                \item[($\Leftarrow$)] Suppose that $\sigma(a_\equiv) \times \sigma(b'_\equiv) = \sigma(c_\equiv)$ for some $a_\equiv$ and $c_\equiv$. 
                Thus, $\sigma(c_\equiv) \geq \sigma(a_\equiv)$. 
                Since $\sigma$ is an isomorphism, there is a $d$ such that $\sigma(d_\equiv) + \sigma(a_\equiv) = \sigma(c_\equiv)$. 
                (If $\sigma(a_\equiv) = \sigma(c_\equiv)$, we can let $d \in Z_w$.) 
                Thus, $\sigma(a_\equiv) \times \sigma(b_\equiv) = \sigma(d_\equiv)$. 
                By IH, $M_w(a,b,d)$. 
                By \ref{ax:prod}, $M_w(a,b',c)$.

                \item[($\Rightarrow$)] Suppose now that $\sigma(a_\equiv) \times \sigma(b'_\equiv) \neq \sigma(c_\equiv)$ for some $a_\equiv$ and $c_\equiv$. 
                Since $\sigma$ is an isomorphism, there is a $d$ where $\sigma(a_\equiv) \times \sigma(b'_\equiv) = \sigma(d_\equiv)$. 
                Thus, by the above reasoning, $M_w(a,b',d)$. 
                Since $\sigma(c_\equiv) \neq \sigma(d_\equiv)$, $c \not\equiv_w d$. 
                So by \ref{ax:func}, $\neg M_w(a,b',c)$. 
            \end{itemize}
        \end{enumerate}
    \end{enumerate}
\end{proof}

Where $\alpha$ is a formula in the language of arithmetic, let $\alpha^*$ be the standard translation into our abbreviations (restricting quantifiers to $N$). 
Let $\mathcal{N}$ be the standard model of arithmetic. 

\begin{corollary}\label{main-cor}
    Let $\mathcal{M}$ be a weakly Stalnakerian model such that $\mathcal{M},w \Vdash \Ax$. 
    Let $\sigma\colon \tup{N_w/\equiv,<_w^*} \cong \tup{\omega,<}$, let $g$ be a variable assignment on $\omega$, and let $\rho\colon (N_w/\equiv) \rightarrow N_w$ where $\rho(a_\equiv) \in a_\equiv$. 
    Then for any $\alpha$ in the language of arithmetic:
    \begin{align*}
        \mathcal{N},g \vDash \alpha \quad\Leftrightarrow\quad \mathcal{M},w,\rho \circ \sigma^{-1} \circ g \Vdash \alpha^*.
    \end{align*}
\end{corollary}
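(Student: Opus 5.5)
We argue by induction on the construction of $\alpha$, proving the claim simultaneously for all assignments $g$ on $\omega$. Write $h_g \coloneqq \rho\circ\sigma^{-1}\circ g$. The key observation is that $h_g(x)$ always lies in $N_w \subseteq d(w)$ and that $\sigma(h_g(x)_\equiv) = g(x)$, since $\rho(a_\equiv)\in a_\equiv$. To have atomic cases of the right shape, we take the language of arithmetic to be relational, with symbols for $0$, successor, $+$, $\times$ and $=$, or we first unnest terms so that every atomic formula has the form $x=y$, $Z(x)$, $S(x,y)$, $A(x,y,z)$ or $M(x,y,z)$. Under the standard translation, $=$ goes to $\equiv$, and $0$, successor, $+$, $\times$ go to $Z$, $S$, $A$, $M$. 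Quantifiers are relativized to $N$: $(\forall x\,\beta)^* = \forall x(N(x)\then\beta^*)$.

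\textbf{Atomic cases.} These follow from the preceding results.
For $x\equiv y$: by Lemma~\ref{lem:order}, $h_g(x)\equiv_w h_g(y)$ iff the two lie in the same class, iff $g(x)=g(y)$.
For $Z$ and $S$: use Lemma~\ref{lem:nat}, which says $Z_w$ is the class $\sigma^{-1}(0)$ and $S^*_w$ is the successor relation transported by $\sigma$.
For $A$ and $M$: use the preceding lemma directly.

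One subtlety must be checked. The conditional $\would$ inside $N$, $<$, $\equiv$ is evaluated using $[\cdot]^{h}$, which is computed at all worlds, not just $w$. However, $\mathcal{M},w,h\Vdash \phi$ depends only on $h$ restricted to the free variables of $\phi$. So evaluating at the particular objects $h_g(x)$ agrees with the notation $\phi_w$ used in the lemmas, which quantify over all elements of $d(w)$.

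\textbf{Inductive steps.} The cases for $\neg$ and $\then$ are immediate, because both sides are evaluated at the single world $w$ with the corresponding assignments.

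For $\forall x\,\beta$, there are two directions.
Suppose $\mathcal{N},g\vDash\forall x\beta$, and let $a\in d(w)$ with $\mathcal{M},w,(h_g)^x_a\Vdash N(x)$, so $a\in N_w$. Put $n=\sigma(a_\equiv)$. By the IH applied to $g^x_n$, we get $\mathcal{M},w,h_{g^x_n}\Vdash\beta^*$. Now $h_{g^x_n}$ sends $x$ to $\rho(a_\equiv)$, not to $a$. To transfer the result from $\rho(a_\equiv)$ to $a$, we need a congruence lemma: for any arithmetic $\beta$, if $h(y)\equiv_w h'(y)$ for all free $y$, with all values in $N_w$, then $\mathcal{M},w,h\Vdash\beta^*$ iff $\mathcal{M},w,h'\Vdash\beta^*$. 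This lemma is proved by its own induction. Its atomic cases use \ref{ax:cong} for $A$ and $M$, transitivity of $\equiv_w$ (Corollary~\ref{cor:preorder}) for $\equiv$, and, for $Z$ and $S$, the fact that $<_w$ is $\equiv_w$-invariant (shown when $<^*_w$ was proved well-defined).
For the converse direction, given $n\in\omega$, instantiate the translated universal at $a=\rho(\sigma^{-1}(n))\in N_w\subseteq d(w)$. Then $(h_g)^x_a = h_{g^x_n}$ exactly, and the IH applies.

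\textbf{Main obstacle.} The main difficulty is the bookkeeping in the quantifier step: relativized quantifiers range over all of $N_w$, not just the representatives chosen by $\rho$. The congruence lemma above is the tool I would establish first to handle this; everything else is routine.
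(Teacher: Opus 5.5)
Your proposal is correct and is essentially the routine induction on $\alpha$ that the paper leaves implicit when it presents this as an immediate corollary of Lemma~\ref{lem:nat} and the addition/multiplication lemma. Your separate $\equiv_w$-congruence lemma for the quantifier step fills in a detail the paper glosses over: relativized quantifiers range over all of $N_w$, not only the $\rho$-chosen representatives.
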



\begin{theorem}\label{main}
    $\mathcal{N} \vDash \alpha$ iff $(\Ax \then \alpha^*) \in \mathsf{L}(\mathcal{WS})$.
\end{theorem}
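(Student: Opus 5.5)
The proof splits into the two directions of the biconditional. Throughout we take $\alpha$ to be a sentence; this is what makes the assignment irrelevant in the final step.

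\emph{Left to right.} Suppose $\mathcal{N} \vDash \alpha$. Fix a weakly Stalnakerian frame, a model $\mathcal{M}$ on it, a world $w$ and an assignment $h$ with $\mathcal{M},w,h \Vdash \Ax$. Since $\Ax$ is a sentence, $\mathcal{M},w \Vdash \Ax$. Lemma~\ref{lem:nat} then supplies an isomorphism $\sigma\colon \tup{N_w/\equiv,<_w^*} \cong \tup{\omega,<}$. Choose any representative function $\rho$; this uses choice, which is harmless here. Corollary~\ref{main-cor} gives $\mathcal{M},w,\rho\circ\sigma^{-1}\circ g \Vdash \alpha^*$ for any assignment $g$ on $\omega$. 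Because $\alpha^*$ has no free variables, its truth does not depend on the assignment, so $\mathcal{M},w,h \Vdash \alpha^*$. Hence $\Ax \then \alpha^*$ holds at every point of every model on a frame in $\WS$.

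\emph{Right to left.} Suppose $\mathcal{N} \nvDash \alpha$. We must build a single weakly Stalnakerian model $\mathcal{M}$ and world $w$ with $\mathcal{M},w \Vdash \Ax$. Corollary~\ref{main-cor} then yields $\mathcal{M},w \Vdash \neg\alpha^*$, so $\Ax \then \alpha^*$ is not valid. This construction is the main obstacle, and I would make it concrete.

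\begin{itemize}
\item Let $W = \{w_n \mid n<\omega\}$ and let $D = d(w_n) = \omega$ for all $n$ (a constant domain).
\item Set $f(P,w_i) = \{w_m\}$, where $m$ is least with $w_m \in P \cup \{w_i\}$ ordered by placing $w_i$ first and then $w_0,w_1,\dots$, and set $f(\varnothing, w_i)=\varnothing$. Let $R = W\times W$.
\end{itemize}

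This $f$ picks the minimum of $P$ in a well-order of $W$ that begins at the evaluation world, so Success, Centering, Uniformity and Uniqueness are immediate. LA also holds, since every nonempty subset of $\omega$ has a least element. The frame is therefore even Stalnakerian.

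For the interpretation, let $I(F,w_n) = \{n\}$. Let $A$ and $M$ be interpreted at every world as the graphs of $+$ and $\times$ on $\omega$.

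\emph{Verifying $\Ax$ at $w_0$.} Compute at $w_0$, where the order is the standard one.
\begin{itemize}
\item For each $a$, $[F(a)] = \{w_a\}$, so $N_{w_0} = \omega$ and $\bar a = w_a$.
\item By Lemma~\ref{lem:order}, $a <_{w_0} b$ iff $a<b$, and $a\equiv_{w_0} b$ iff $a=b$.
\end{itemize}
Hence $Z$, $S$, $A$ and $M$ pick out exactly zero, successor, addition and multiplication on $\omega$. Axioms \ref{ax:zero}--\ref{ax:prod} are then the defining recursion equations together with functionality and congruence, which hold trivially for the true operations.

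The one thing to check carefully is that $<$ and $\equiv$ are evaluated at $w_0$ through $f(\cdot,w_0)$ alone. The formulas $F(a)\vee F(b)$ and $\neg F(b)$ are atomic-level, so the derived $N$, $<$, $\equiv$, $Z$ and $S$ come out correctly without any trouble about how other worlds order things. Then $\mathcal{M},w_0 \Vdash \Ax \wedge \neg\alpha^*$, which finishes the proof.
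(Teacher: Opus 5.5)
Your proof is correct and follows the paper's. Left to right is Corollary~\ref{main-cor}, and you usefully make explicit that $\alpha$ must be a sentence so the assignment drops out. Right to left uses the same $\omega$-world model as the paper, with $I(F,n)=\{n\}$, standard $+$ and $\times$, and $\Ax$ checked at world $0$. The one difference is that your selection function (a well-order starting at the evaluation world, then $w_0,w_1,\dots$) gives a Stalnakerian frame with universal $R$, as in the paper's remark after the theorem, whereas the paper's own $f(X,n)=\{\min_<(X-\{m \mid m<n\})\}$ is only weakly Stalnakerian. One wording fix: as literally written, ``$m$ least with $w_m \in P \cup \{w_i\}$'' would always select $w_i$ and violate Success; you mean the least element of $P$ itself in that order, as your next sentence says.
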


\begin{proof}
\begin{itemize}
\item[($\Rightarrow$)] By Corollary \ref{main-cor}.
\item[($\Leftarrow$)] Assume $(\Ax \then \alpha^*) \in \mathsf{L}(\mathcal{WS})$. 
We define the following weakly Stalnakerian model $\mathcal{M}$: 
Let $W = D = \omega$, $d(w)=D$ for all $w\in W$, $R=W^2$, and define:
    \begin{equation*}
        f(X,n) = \begin{cases}
            \{\min_{<}(X - \{m \mid m < n\})\} & \text{if }X - \{m \mid m < n\}\neq\varnothing\\
            \varnothing & \text{otherwise}
        \end{cases}
    \end{equation*}
    Let $I(F,n) = \{n\}$, with $I(A,n)$ and $I(M,n)$ being addition and multiplication. 
    We leave it as an exercise to verify that $\mathcal{M}$ is indeed weakly Stalnakerian and that $\Ax$ is satisfied at the world $0$ in $\mathcal{M}$. 
    Hence, $\alpha^*$ holds at $0$ in $\mathcal{M}$. 
    By Corollary \ref{main-cor}, $\mathcal{N} \vDash \alpha$.
\end{itemize}
\end{proof}

\begin{remark}
Note that the counter-model provided in the $(\Leftarrow)$ direction of the proof of Theorem \ref{main} is \emph{weakly Stalnakerian} but not \emph{Stalnakerian}. One could turn it into a Stalnakerian counter-model by modifying the accessibility relation so that $R(n)=\{k \mid k\geq n\}$. If one wants instead to get a Stalnakerian counter-model with a universal accessibility relation, one need only modify the definition of the selection function as follows:
    \begin{equation*}
        f(X,n) = \begin{cases}
            \{\min_{<}(X - \{m \mid m < n\})\} & \text{if }X - \{m \mid m < n\}\neq\varnothing\\
            \{\min_<(X)\} & \text{if $X - \{m \mid m < n\} = \varnothing$ but $X \neq \varnothing$} \\
            \varnothing & \text{otherwise}
        \end{cases}
    \end{equation*}
\end{remark}

\begin{corollary}\label{cor:wsnonax}
    $\mathsf{L}(\mathcal{WS})$ is not recursively axiomatizable. 
\end{corollary}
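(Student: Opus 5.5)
The plan is a standard reduction from true arithmetic, using Theorem~\ref{main}. Suppose for contradiction that $\mathsf{L}(\mathcal{WS})$ were recursively axiomatizable. Then it would be recursively enumerable: enumerate all derivations from the recursive axiom set under the (recursive) rules and list their conclusions. The only point to check is that the deductive closure of a recursive axiom set under finitely many effective rules is r.e., which is routine.

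Next, the translation $\alpha \mapsto \alpha^*$ is computable. It is a syntactic substitution that replaces arithmetic symbols by the fixed abbreviations $N$, $<$, $\equiv$, $Z$, $S$, $A$, $M$ and relativizes quantifiers to $N$. Hence the map $\alpha \mapsto (\Ax \then \alpha^*)$ is a recursive function from arithmetic sentences to $\mathcal{L}$-formulas. By Theorem~\ref{main}, $\mathcal{N}\vDash\alpha$ iff $(\Ax\then\alpha^*)\in\mathsf{L}(\mathcal{WS})$. So true arithmetic $\mathrm{Th}(\mathcal{N})$ is many-one reducible to $\mathsf{L}(\mathcal{WS})$ and would therefore be r.e.

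Since $\mathrm{Th}(\mathcal{N})$ is complete, for each $\alpha$ exactly one of $\alpha,\neg\alpha$ lies in it. An r.e. complete theory is decidable: enumerate until either $\alpha$ or $\neg\alpha$ appears. This contradicts Tarski's or Gödel's theorem that $\mathrm{Th}(\mathcal{N})$ is not recursive (indeed not even arithmetical). Alternatively, one can cite directly that $\mathrm{Th}(\mathcal{N})$ is not r.e.

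The only real obstacle is already discharged by Theorem~\ref{main}. The remaining care is to make sure the translation handles free variables and sentences uniformly, by restricting to sentences $\alpha$. One could also note that, by Corollary~\ref{cor:equiv}, the same conclusion transfers to $\mathsf{L}(\mathcal{S})$.
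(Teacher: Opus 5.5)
Your proposal is correct and matches the paper's intended argument: Theorem~\ref{main} gives a computable many-one reduction of true arithmetic to $\mathsf{L}(\mathcal{WS})$, and since $\mathrm{Th}(\mathcal{N})$ is not r.e., $\mathsf{L}(\mathcal{WS})$ cannot be r.e., hence cannot be recursively axiomatizable. You were right to restrict to sentences $\alpha$: the $(\Rightarrow)$ direction of Theorem~\ref{main} goes through Corollary~\ref{main-cor}, which only covers assignments of the form $\rho\circ\sigma^{-1}\circ g$, so it can fail for open formulas under arbitrary assignments.
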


\begin{corollary}\label{cor:snonax}
    $\mathsf{L}(\mathcal{S})$ is not recursively axiomatizable. 
\end{corollary}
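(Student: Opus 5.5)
The plan is to reduce to Corollary~\ref{cor:wsnonax} using Corollary~\ref{cor:equiv}, which states $\mathsf{L}(\mathcal{WS}) = \mathsf{L}(\mathcal{S})$. Since the two logics are literally the same set of formulas, any recursive axiomatization of $\mathsf{L}(\mathcal{S})$ would be a recursive axiomatization of $\mathsf{L}(\mathcal{WS})$. That contradicts Corollary~\ref{cor:wsnonax}, so this route is essentially one line.

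For a self-contained argument, I would also spell out why Corollary~\ref{cor:wsnonax} holds, since the present corollary inherits that reasoning. Suppose $\mathsf{L}(\mathcal{S})$ were recursively axiomatizable. Then its theorems are recursively enumerable. The map $\alpha \mapsto (\Ax \then \alpha^*)$ is computable, because the translation $\alpha^*$ just expands the abbreviations $N$, $<$, $\equiv$, $Z$, $S$ and relativizes quantifiers to $N$. By Theorem~\ref{main} together with Corollary~\ref{cor:equiv}, $\mathcal{N}\vDash\alpha$ iff $(\Ax\then\alpha^*)\in\mathsf{L}(\mathcal{S})$. So true arithmetic would be recursively enumerable. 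It is also closed under negation and complete ($\mathcal{N}\vDash\neg\alpha$ iff $\mathcal{N}\nvDash\alpha$), so it would be decidable, contradicting Tarski/Gödel (true arithmetic is not even arithmetical).

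Alternatively, one could avoid Corollary~\ref{cor:equiv} and redo Theorem~\ref{main} directly for $\mathcal{S}$. The ($\Rightarrow$) direction is immediate since $\mathcal{S}\subseteq\mathcal{WS}$. For ($\Leftarrow$), one uses the Stalnakerian countermodel from the Remark following Theorem~\ref{main}, with $R(n)=\{k\mid k\geq n\}$. The only step with any content there is checking that LA holds in that model, i.e.\ that $f(X,n)=\varnothing$ exactly when $X\cap R(n)=\varnothing$. This is immediate from the definition of $f$. I expect no real obstacle; the only care needed is noting that the reduction is computable and that Corollary~\ref{cor:equiv} gives identity of the logics, not merely inclusion.
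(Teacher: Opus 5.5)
Your proposal is correct, and your main route is exactly the paper's: the paper proves this corollary in one line from Corollary~\ref{cor:equiv} and Corollary~\ref{cor:wsnonax}. Your two additions are also sound. The first spells out the computable reduction of true arithmetic. The second redoes Theorem~\ref{main} directly for $\mathcal{S}$ using the Stalnakerian countermodel with $R(n)=\{k\mid k\geq n\}$ from the Remark, which shows that Corollary~\ref{cor:equiv} is not strictly needed for this result.
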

\begin{proof}
    Immediate from Corollaries~\ref{cor:equiv} and \ref{cor:wsnonax}.
\end{proof}

\begin{remark}
    Indeed, $\mathsf{L}(\mathcal{S})$ (equivalently, $\mathsf{L}(\mathcal{WS})$) is not even \emph{arithmetical} by Tarski's theorem that the first-order theory of $\mathcal{N}$ is not arithmetical. This leaves open the problem of determining the exact complexity of the logic.
\end{remark}

The reader will observe that variable domains played no essential role in the foregoing argument, so the constant domain semantics similarly fails to be recursively axiomatizable. Although we assumed a language with countably many $n$-place predicates (for each $n$), the argument just given can be run in a language with only a single unary predicate and two ternary predicates (all other predicates used in the argument are defined in terms of $F, A$, and $M$). This leaves open the question of which natural fragments of the logic \emph{are} axiomatizable.

\cite{KocurekWalshWeissA} proved that $\mathsf{CQ}$ is frame incomplete. Interestingly, this result can also be extracted (indirectly) as a consequence of Corollary~\ref{cor:wsnonax}.


\begin{corollary}\label{cor:frameincomplete}
    $\mathsf{CQ}$ is frame incomplete.
\end{corollary}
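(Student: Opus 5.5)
The plan is to argue by contradiction, using Corollary~\ref{cor:wsnonax}. Suppose $\mathsf{CQ}$ were frame complete, so that $\mathsf{CQ} = \mathsf{L}(\mathcal{C})$ for some class $\mathcal{C}$ of selection frames. Two facts will then yield the contradiction. First, $\mathsf{CQ}$ is recursively axiomatizable: it is given by finitely many schemata and rules in \cite{StalnakerThomason1970}, so its theorems are recursively enumerable. Second, I will show that any frame validating $\mathsf{CQ}$ must be weakly Stalnakerian, so that $\mathcal{C} \subseteq \WS$. I will also use that $\mathsf{CQ}$ is sound for $\WS$ (equivalently, by Corollary~\ref{cor:equiv}, for $\S$); this is established in \cite{KocurekWalshWeissA}.

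For the second fact, fix a frame $\mathcal{F} \in \mathcal{C}$. I will check each condition defining weak Stalnakerianness by taking the relevant $\mathsf{CQ}$ axiom and choosing a valuation that makes its propositional letters denote arbitrary sets $P, Q \subseteq W$. Here a $0$-ary predicate, or a monadic predicate applied to a fixed variable, can serve as a propositional letter, since $I$ may be chosen arbitrarily at each world. The correspondences are as follows.
\begin{itemize}
\item $\phi \would \phi$ yields Success.
\item $(\phi \would \psi) \then (\phi \then \psi)$ together with $(\phi\wedge\psi)\then(\phi\would\psi)$ yields Weak Centering. For the second, take $[\phi]=P$, $[\psi]=\{w\}$, evaluate at $w \in P$, and use Success to get $f(P,w)\subseteq\{w\}$. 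For Weak Centering itself, evaluate $(\phi\would\psi)\then(\phi\then\psi)$ at $w\in P$ with $[\psi] = W\setminus\{w\}$: this rules out $f(P,w)\subseteq W\setminus\{w\}$, so $w \in f(P,w)$.
\item Conditional excluded middle $(\phi\would\psi)\vee(\phi\would\neg\psi)$, with $[\psi]$ a singleton $\{v\}$ for $v \in f(P,w)$, yields Uniqueness.
\item $((\phi\would\psi)\wedge(\psi\would\phi))\then((\phi\would\chi)\iff(\psi\would\chi))$, with $[\chi]=f(P,w)$, yields Uniformity.
\end{itemize}
These are frame-correspondence arguments of the kind familiar from the sentential case. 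The point to notice is that frame validity quantifies over all interpretations, so every subset of $W$ is realized as some $[\phi]^g$.

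With $\mathcal{C} \subseteq \WS$ in hand, I get $\mathsf{L}(\WS)\subseteq\mathsf{L}(\mathcal{C})=\mathsf{CQ}$. Soundness gives $\mathsf{CQ}\subseteq\mathsf{L}(\WS)$, so $\mathsf{CQ}=\mathsf{L}(\WS)$. This contradicts Corollary~\ref{cor:wsnonax}, since $\mathsf{CQ}$ is recursively axiomatizable.

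The main obstacle is bookkeeping rather than anything conceptual. I need to confirm that the axiom list of $\mathsf{CQ}$ really contains schemata whose instances with atomic letters enforce these four conditions. I also need to confirm soundness of $\mathsf{CQ}$ over $\WS$, in particular for its quantificational axioms such as the converse Barcan-type principles, which must be checked against variable domains. If some quantifier axiom is not sound there, I would fall back on a weaker fact that still suffices: $\mathcal{C}$ is nonempty (because $\mathsf{CQ}$ is consistent) and contained in $\WS$. Then every $\mathsf{L}(\WS)$-validity, including each $\Ax\then\alpha^*$ for true $\alpha$, lies in $\mathsf{CQ}$. Conversely, for false $\alpha$ the countermodel from Theorem~\ref{main} would need to lie in $\mathcal{C}$. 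That is harder to guarantee, so I prefer the soundness route.
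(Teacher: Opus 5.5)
Your proof is correct and follows the paper's argument: assume $\mathsf{CQ}=\mathsf{L}(\mathcal{C})$, get $\mathcal{C}\subseteq\WS$, use soundness to obtain $\mathsf{CQ}=\mathsf{L}(\WS)$, and contradict Corollary~\ref{cor:wsnonax} using the recursive axiomatizability of $\mathsf{CQ}$. The only difference is that you re-derive the ``only if'' half of Proposition~1 of \cite{KocurekWalshWeissA} by explicit correspondence arguments, whereas the paper simply cites that proposition for both directions; your arguments check out, though the Uniformity case also needs the symmetric choice $[\chi]=f(Q,w)$ to get the reverse inclusion.
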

\begin{proof}
    Proposition~1 of \cite{KocurekWalshWeissA} establishes that $\mathsf{CQ}$ is valid on a frame iff that frame is weakly Stalnakerian. 
    Thus, if it were the case that $\mathsf{CQ}=\mathsf{L}(\mathcal{C})$ for some class of selection frames $\mathcal{C}$, then $\mathcal{C}\subseteq\mathcal{WS}$, and so $\mathsf{L}(\mathcal{WS})\subseteq\mathsf{L}(\mathcal{C})=\mathsf{CQ}$. 
    By soundness of $\mathsf{CQ}$, $\mathsf{CQ}\subseteq\mathsf{L}(\mathcal{WS})$, so $\mathsf{CQ}=\mathsf{L}(\mathcal{WS})$. 
    However, $\mathsf{CQ}$ is recursively axiomatizable (since it is formulated axiomatically), whereas $\mathsf{L}(\mathcal{WS})$ is not by Corollary~\ref{cor:wsnonax}, a contradiction.
\end{proof}

\noindent Note, however, that while this argument establishes that $\mathsf{CQ}$ is not the logic of weakly Stalnakerian frames, it does not provide a witness to the incompleteness of \textsf{CQ}, that is, a valid formula unprovable in \textsf{CQ}. 
By contrast, \cite{KocurekWalshWeissA} give a constructive frame incompleteness proof that supplies such a witness. 

Corollaries~\ref{cor:wsnonax}--\ref{cor:snonax} also resolve the problems left open by \cite[Section~6]{KocurekWalshWeissA}. 
For example, they prove that \textsf{CQ} (alias: \textsf{QST}) is frame incomplete by identifying a formula, $\neg\mathsf{DS}$, which is valid in the class of all frames for \textsf{CQ} but not provable in the logic. 
They ask whether the addition of $\neg\mathsf{DS}$ as an axiom to \textsf{CQ} might yield a complete system. 
We can now answer this question in the negative. 
The class of frames on which $\mathsf{CQ}+\neg\mathsf{DS}$ is valid is just $\mathcal{WS}$, and $\mathsf{L}(\mathcal{WS})$ is not recursively axiomatizable. 
Moreover, adding axioms to strengthen the outer modal to $\mathsf{CQ}$ does not change the picture. 
For $\mathsf{CQ}$ is valid on all and only weakly Stalnakerian frames, and there are weakly Stalnakerian models with even universal (selection-)accessibility relations satisfying $\Ax$. 
So strengthening the logic of the outer modal doesn't restore axiomatizability.


\section{Conclusion}
\label{sec:conc}

A solution to the logical problem of conditionals, as conceived by \cite{Stalnaker1968theory}, would be an axiomatization of the logic of a certain kind of conditional function---a function on propositions which obeys the constraints identified by Stalnaker. In this paper, we proved that this problem is unsolvable in a first-order setting. More precisely, the first-order logic of the class of propositional selection function frames identified by Stalnaker is not recursively axiomatizable since it encodes arithmetic.

How one interprets the significance of this result may depend on one's antecedent convictions about the relationship among inference, meaning, and model theory. Those in the tradition of proof-theoretic semantics regard meaning as constituted by inferential role. If one adds to this the natural further thought that the relevant inferential practices must be mechanizable, then a central task of semantics—if not \emph{the} central task of semantics—is the identification of effectively axiomatized systems that codify these inferential practices. Model theory would retain at most an instrumental role in semantics. According to this conception, Stalnaker's semantics overshoots. Indeed, the consequence relation it defines is too complex; it cannot yield the logic of any language whose meanings are fixed by mechanizable rules.

Of course, one might deny that semantic theory is answerable to deduction in this way. One might regard model-theoretic semantics as more directly targeting meaning insofar as it directly models word-world relations. Indeed, some semanticists regard deductive treatments of consequence as ``superfluous'' to semantics \parencite[53]{dowty2012introduction}. Even for those who endorse such a conception of semantics, our result still bears on the theoretical project of characterizing the semantics of the conditional.  In particular, \cite{Holliday2018} have identified “benefits of providing axiomatizations for the purposes of semantic theorizing” that do not depend on an inferential conception of meaning \parencite[73]{Holliday2018}. Chief among the benefits that they highlight is that axiomatizing a model-theoretic semantics renders its entailment predictions perspicuous and empirically assessable. Indeed, they claim that ``we may not fully
understand the predictions of a semantic account until we have an intuitive, complete proof system'' \parencite[77]{Holliday2018}. On the negative side, axiomatization can draw our attention to problematic axioms or inferences; on the positive side, a plausible-looking axiomatization provides assurance that nothing spurious is hiding in the semantics. This assurance is unavailable here, and unavailable in the strongest sense. It is not merely that we have failed to find a complete system; rather, there is no such system to be found.

Those who are moved by the foregoing considerations may want to re-evaluate the principles that yield Stalnaker's non-axiomatizable logic. One \emph{might} have thought that LA was to blame for non-axiomatizability, although we have demonstrated that this is not so. One interesting direction is to study the family of logics that arise when one drops various constraints on the selection function. Note that Uniqueness and Uniformity played an essential role in our proofs. In the presence of these constraints, various choices were equivalent; for example, LA and WLA generated the same logic and WLA itself was derivable. When one drops these constraints, a deluge of logics results. Perhaps some of these logics are both attractive and axiomatizable.

\subsection*{Acknowledgments} 

Our proof search was meaningfully aided by the use of AI (specifically, Anthropic's Fable 5). 
We also used AI to proofread our final draft.
All proofs where AI was used have been independently checked by the authors. 
No AI was used to generate any text in this paper.

\printbibliography

\end{document}